\newtheorem{thm}{Theorem}[section]
\newtheorem{lem}[thm]{Lemma}
\newtheorem{prop}[thm]{Proposition}
\theoremstyle{definition}
\newtheorem{dfn}[thm]{Definition}
\newtheorem{ex}[thm]{Example}
\theoremstyle{remark}
\newtheorem{remark}[thm]{Remark}
\newtheorem{notation}[thm]{Notation}
\newcommand{\CA}{{\mathcal{A}}}
\newcommand{\CE}{{\mathcal{E}}}
\newcommand{\CF}{{\mathcal{F}}}
\newcommand{\CEa}{\CE^{0,-}}
\newcommand{\bEz}{\overline{\CE^0}}
\newcommand{\bFz}{\overline{\CF^0}}
\newcommand{\CL}{{\mathcal{L}}}
\newcommand{\CB}{{\mathcal{B}}}
\newcommand{\af}{\alpha}
\newcommand{\bt}{\beta}
\newcommand{\gm}{\gamma}
\newcommand{\dt}{\delta}
\newcommand{\ld}{\lambda}
\newcommand{\sk}{{\rm sink}}
\newcommand{\Om}{\Omega}
\begin{document}


\title[AF labeled graph $C^*$-algebras]
{AF labeled graph $C^*$-algebras}

\author[J. A. Jeong]{Ja A Jeong$^{\dagger}$}
\thanks{Research partially supported by NRF-2012-008160$^{\dagger}$
and PARC  2012-000939$^{\ddagger}$.}
\address{
Department of Mathematical Sciences and Research Institute of Mathematics\\
Seoul National University\\
Seoul, 151--747\\
Korea} \email{jajeong\-@\-snu.\-ac.\-kr }

\author[E. J. Kang]{Eun Ji Kang$^{\dagger}$}
\address{
Department of Mathematical Sciences\\
Seoul National University\\
Seoul, 151--747\\
Korea} \email{kkang33\-@\-snu.\-ac.\-kr }

\author[S. H. Kim]{Sun Ho Kim$^{\ddagger}$}
\address{
PARC\\
Seoul National University\\
Seoul, 151--747\\
Korea} \email{hoya4200\-@\-snu.\-ac.\-kr }

\subjclass[2000]{37B40, 46L05, 46L55}

\keywords{labeled graph $C^*$-algebra,  AF algebra}

\subjclass[2000]{46L05, 46L55}

\begin{abstract}
It is known that a graph $C^*$-algebra $C^*(E)$ is 
approximately finite dimensional (AF) if and only if 
the graph $E$ has no loops. 
In this paper we consider the question of when a labeled graph $C^*$-algebra 
$C^*(E,\CL,\CB)$ is AF. 
A notion of loop in a labeled space $(E,\CL,\CB)$ 
is defined when $\CB$ is the smallest one among the accommodating sets
that are closed under relative complements  and 
it is proved that if a labeled graph 
$C^*$-algebra is AF,  the labeled space has no loops. 
A sufficient condition for a labeled space to be associated to AF algebra  
is also given. 
For graph $C^*$-algebras $C^*(E)$, this sufficient condition is 
also a necessary one.
Besides, we discuss other equivalent conditions for a graph $C^*$-algebra 
to be AF in the setting of labeled graphs and prove that these conditions are not 
always equivalent by invoking various examples.
\end{abstract}

\maketitle

\setcounter{equation}{0}

\section{Introduction}

The class of graph $C^*$-algebras was introduced 
in \cite{KPR, KPRR} as a generalization of the 
Cuntz-Krieger algebras of finite $\{0,1\}$ matrices \cite{CK}. 
The main benefit of working with  graph algebras lies in the fact that 
many complex properties and structures of   graph $C^*$-algebras 
can be explained  in terms of conditions of graphs   
(see \cite{BHRS, BPRS, JP, KPR, KPRR} among many others).  
 For example, it is now well known \cite{KPR} that 
a directed graph $E$ has no loops if and only if its 
graph $C^*$-algebra $C^*(E)$ is approximately finite dimensional (AF). 
Moreover the class contains 
all AF algebras up to  Morita equivalence \cite{Dr}. 

Besides the graph $C^*$-algebras, there have been various generalizations of 
Cuntz-Krieger algebras.  
The ultragraph algebras \cite{To1} and the Exel-Laca algebras \cite{EL} 
are those generalizations  which also include the $C^*$-algebras 
of row-finite graphs with no sinks. 
In \cite{KST}, conditions for an AF algebra to be realized as a graph $C^*$-algebra, 
an Exel-Laca algebra, and an ultragraph algebra are given,  
and then in \cite{ES}  it is proved that 
if a higher-rank graph algebra $C^*(\Lambda)$ is AF, 
the higher-rank graph $\Lambda$ does not have  
an appropriate analogue of loop. 
The higher-rank graph algebras are of course another generalization of 
the Cuntz-Krieger algebras.

Recently  a class of $C^*$-algebras $C^*(E, \CL, \CB)$ 
associated with labeled graphs $(E,\CL)$, more 
explicitly labeled spaces $(E,\CL,\CB)$, 
has been introduced in \cite{BP1} to provide a common framework for 
working with some of the generalized  Cuntz-Krieger algebras, 
and studied in \cite{BCP, BP2,  JK, JKP}. 
We investigate in this paper the question of when a labeled graph 
$C^*$-algebra $C^*(E,\CL,\bEz)$ is AF, where 
$(E,\CL,\bEz)$ is a labeled space such that the accommodating set 
$\bEz$, consisting of certain vertex subsets, is the smallest one 
that is closed under relative complements. 
To explore the question, 
we first find several conditions on a directed graph $E$  
that are equivalent to the existence of a loop in $E$ 
(Proposition~\ref{prop-AF graph algebra}), and then 
extending one of these conditions we define 
a notion of loop for a labeled space (Definition~\ref{dfn-loop}). 
Each of the other equivalent conditions can also be 
 restated in terms of labeled spaces, but it is not clear 
 whether all these conditions, (a)-(d) stated below, 
 are still equivalent to each other, 
 especially to  $C^*(E,\CL,\bEz)$ being AF:

\vskip .5pc 

\begin{enumerate}
\item[(a)] For every finite set  $\{A_1, \dots, A_N\}$ of $\bEz$ and every $K\geq 1$, 
there exists an $m_0\geq 1$ such that 
$A_{i_1} E^{\leq K} A_{i_2} \cdots E^{\leq K} A_{i_n} =\emptyset$  
for all $n> m_0$ and $A_{i_j}\in \{A_1,\dots,A_N\}$.  
\item[(b)] $(E,\CL,\bEz)$ has no repeatable paths. 
\item[(c)] $C^*(E,\CL,\bEz)$ is an AF algebra.
\item[(d)] $(E,\CL,\bEz)$ has no loops (in the sense of Definition~\ref{dfn-loop}).
\end{enumerate}

\vskip 1pc 
\noindent In (a), $A_{i_1} E^{\leq K} A_{i_2} \cdots E^{\leq K} A_{i_n}$ 
denotes the set of all paths $x=x_1\cdots x_{n-1}$ consisting of 
subpaths $x_k$, from $A_{i_k}$ to $A_{i_{k+1}}$ in $E$, 
with length $|x_k|\leq K$  for $1\leq k\leq n-1$.
A path $\af$ is {\it repeatable} if $\af^n$ appears in 
the (labeled) graph for all $n\geq 1$.

The conditions (a)-(d) above are all equivalent for 
graph $C^*$-algebras $C^*(E)\cong C^*(E,\CL_{id},\bEz)$ 
(Proposition~\ref{prop-AF graph algebra}) as already mentioned, 
where $\CL_{id}$ is the trivial labeling.  
The purpose of this paper is to understand the relations 
of these conditions. 
Both of the implications (a) $\Rightarrow$  (b) and (b) $\Rightarrow$ (d) 
are immediate, but it will turn out in this paper that 
not all of them are equivalent. 
This shows an interesting  contrast between the labeled graph $C^*$-algebras and 
the usual graph $C^*$-algebras.  

The main results obtained in the paper are as follows.

\vskip 1pc 

\begin{thm} {\rm(Theorem~\ref{thm-loopAF})}
 Let $(E,\CL)$ be a labeled graph. 
If $C^*(E,\CL, \bEz)$ is an AF algebra, the labeled space $(E,\CL,\bEz)$ has no loops.
\end{thm}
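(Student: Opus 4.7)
My plan is to argue by contrapositive: assuming that $(E,\CL,\bEz)$ contains a loop (in the sense of Definition~\ref{dfn-loop}), I will exhibit inside $C^*(E,\CL,\bEz)$ an obstruction to the AF property. The obstruction I aim for is the classical one: AF algebras have the property that every projection is finite in the sense of Murray--von Neumann, and no corner of an AF algebra contains a unital copy of $C(\T)$. Hence it suffices to produce, from the loop, a generator projection $p_A$ and a partial isometry $v\in p_AC^*(E,\CL,\bEz)p_A$ with $v^*v=p_A$, and then observe that either $vv^*\neq p_A$ (so $p_A$ is an infinite projection) or $vv^*=p_A$ with $v$ of infinite spectral order (so $C^*(v)\cong C(\T)$ sits unitally in the corner).

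To construct $v$, I would take a loop based at a generalized vertex $A\in\bEz$ with labeled word $w$, and set $v:=p_A\,s_w\,p_A$, aiming to show $v^*v=p_A$ via the labeled Cuntz--Krieger relations. The content of Definition~\ref{dfn-loop} is designed so that $A$ is absorbed by the range of $w$ and so that the word $w$ is, in particular, repeatable; this together with the defining relations should guarantee that $v$ is a nonzero partial isometry.

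Once $v^*v=p_A$ is in hand, the case analysis closes the argument. If $v$ is not a unitary in its corner, then $p_A\sim vv^*<p_A$ exhibits $p_A$ as infinite, contradicting AF-ness of the hereditary subalgebra $p_AC^*(E,\CL,\bEz)p_A$. If $v$ is a unitary in the corner, then the loop property must be used to exclude any relation $v^n=p_A$ for $n\geq 1$; otherwise the closed labeled path $w^n$ would collapse in a way forbidden by the non-periodicity built into the definition of a loop. In that case the spectrum of $v$ equals $\T$ and $C^*(v)\cong C(\T)$ embeds unitally into the corner, again violating AF-ness.

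The principal obstacle is not the case analysis but the identity $v^*v=p_A$. In a labeled graph $C^*$-algebra, $s_w^*s_w$ is in general a sum of mutually orthogonal generalized vertex projections indexed by a partition of the relevant range subset inside $\bEz$, and isolating exactly the contribution of $A$ requires that $A$ itself appear as one of the blocks of this partition. This is precisely the role of the hypothesis that $\bEz$ is the smallest accommodating set closed under relative complements: the relevant intersections and relative complements among the generalized vertices visited by the loop remain inside $\bEz$, so that the local Cuntz--Krieger decomposition at the endpoint of $w$ is compatible with $A$. Verifying this combinatorial compatibility, and then reading off the identity from the defining relations, is the most delicate step of the proof.
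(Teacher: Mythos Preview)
Your overall strategy matches the paper's: from a loop at $A$ construct a partial isometry in the corner $p_AC^*(E,\CL,\bEz)p_A$, and derive a contradiction either from an infinite projection or from a unitary obstruction. However, there is a genuine gap in your treatment of the unitary case, and a misconception about where the difficulty lies.

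First, the identity $v^*v=p_A$ is not the delicate step you make it out to be. With $v=p_As_\af p_A$ and $\af$ a loop at $A$ (so $A\subset r(A,\af)$ by Definition~\ref{dfn-loop}), the relation $p_As_\af=s_\af p_{r(A,\af)}$ together with $s_\af^*s_\af=p_{r(\af)}$ gives $v^*v=p_{A\cap r(A,\af)}=p_A$ immediately. No special feature of $\bEz$ beyond being accommodating is used here; your discussion of relative complements and of partitioning $s_w^*s_w$ into generalized vertex blocks is off target.

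The real gap is in the unitary case. You assert that ``the loop property must be used to exclude any relation $v^n=p_A$'' via ``non-periodicity built into the definition of a loop,'' but Definition~\ref{dfn-loop} contains no non-periodicity hypothesis whatsoever: a loop at $A$ is simply a labeled path $\af$ with $A\subset r(A,\af)$. Worse, even granting $v^n\neq p_A$ for all $n\geq 1$, your conclusion that the spectrum of $v$ equals $\T$ does not follow: a unitary whose spectrum is a Cantor subset of $\T$ satisfies $v^n\neq 1$ for every $n$, yet $C^*(v)$ is AF. What is missing is the gauge action. The paper first uses Proposition~\ref{prop-loop-exit} to reduce to the case $A=r(A,\af)$ (otherwise the loop has an exit and $p_A$ is infinite), and then observes that $U:=s_\af p_A$ is a unitary in the corner with $\gm_z(U)=z^{|\af|}U$. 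This homogeneity forces $U$ out of the connected component of $p_A$ in the unitary group (via \cite[Proposition~3.9]{ES}), contradicting AF-ness of the hereditary subalgebra. Equivalently, $\gm_z(U)=z^{|\af|}U$ makes $\sigma(U)$ invariant under all rotations, hence equal to $\T$. Without invoking the gauge action your unitary-case argument does not close.
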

 
\begin{thm} {\rm (Theorem~\ref{thm-AF})}
Let $(E,\CL,\bEz)$ be a labeled space such that  
for every finite subset  $\{A_1, \dots, A_N\}$ of $\bEz$ and every $K\geq 1$, 
there exists an $m_0\geq 1$ for which 
$$ A_{i_1} E^{\leq K} A_{i_2} E^{\leq K} A_{i_3}\cdots E^{\leq K} A_{i_n} =\emptyset$$ 
for all $n> m_0$ and $1\leq i_j\leq N$. 
Then $C^*(E,\CL,\bEz)$ is an AF algebra. 
\end{thm}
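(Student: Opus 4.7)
The plan is to realize $C^*(E,\CL,\bEz)$ as the closure of an increasing union of finite-dimensional $*$-subalgebras, with the hypothesis supplying the uniform length bounds that make these approximants finite-dimensional. Since $C^*(E,\CL,\bEz)$ is densely spanned by monomials $s_\af p_A s_\bt^*$ with $\af,\bt$ labeled paths and $A\in\bEz$, it suffices to show that any finite collection of such monomials lies in a finite-dimensional $*$-subalgebra. Given such a collection, I would extract a finite set $F=\{A_1,\dots,A_N\}\subset\bEz$ containing the $A$'s that appear (enlarged to be closed under intersections and relative complements, which is possible because $\bEz$ has this closure property), a finite subalphabet $\CA_0\subset\CL(E^1)$ containing all letters used, and a common length bound $K$ for the $\af$'s and $\bt$'s. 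I would then consider the $*$-subalgebra $\CB_{F,K,\CA_0}$ generated by $\{s_\af p_{A_i} : \af\in\CA_0^{\leq K},\, A_i\in F\}\cup\{p_{A_i}\}$.

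The heart of the argument is to show that each $\CB_{F,K,\CA_0}$ is finite-dimensional. The standard labeled-graph relations --- in particular $p_A s_a=s_a p_{r(A,a)}$ and the formula for $s_\bt^* s_\gm$, which is either zero or a shorter monomial $s_{\mu'} p_{A'}$ --- allow any product of generators and their adjoints to be rewritten as a finite linear combination of elements $s_\mu p_C s_\nu^*$, where $\mu,\nu$ are labels of $E$-paths that decompose as concatenations $x_1 x_2\cdots x_{n-1}$ with each $x_k\in A_{i_k} E^{\leq K} A_{i_{k+1}}$, and $C$ lies in the Boolean subalgebra of $\bEz$ generated by $F$ together with the range sets $r(A_i,\gm)$ that arise in the reductions. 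The hypothesis applied to $F$ and $K$ yields an $m_0$ with $n\leq m_0$, so $|\mu|,|\nu|\leq Km_0$; since all letters come from the finite alphabet $\CA_0$, only finitely many triples $(\mu,C,\nu)$ occur, making $\CB_{F,K,\CA_0}$ finite-dimensional. Letting $F,\CA_0,K$ range over all finite choices, the union $\bigcup \CB_{F,K,\CA_0}$ is a dense $*$-subalgebra of $C^*(E,\CL,\bEz)$, displaying it as an AF algebra.

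The main obstacle is the dimension count in the previous paragraph, specifically the control of the Boolean subalgebra of $\bEz$ that appears. The operation $A\mapsto r(A,\gm)$, which moves projections past partial isometries, generates new sets in $\bEz$; however, the bound $|\gm|\leq Km_0$ combined with $\gm\in\CA_0^{\leq Km_0}$ limits these sets to finitely many, and closing under the Boolean operations (which $F$ is already closed under) therefore produces a finite algebra. A secondary point is that the reduction rules for $s_\bt^* s_\gm$ do not introduce letters outside $\CA_0$: the result is always a suffix or prefix of the original paths (up to $p$-insertions), so its labels remain in $\CA_0$. Once this bookkeeping is handled, condition (a) does the essential work of bounding the concatenation length $n$, and everything else flows from that uniform bound.
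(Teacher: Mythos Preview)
Your proposal is correct and follows essentially the same route as the paper's proof: take a finite collection of monomials $s_{\af_i}p_{A_i}s_{\bt_i}^*$, use the multiplication formula to see that arbitrary products are again monomials $s_{\af_i\mu}p_C s_{\bt_j\nu}^*$ with $\mu,\nu\in\CL(A_{i_1}E^{\leq K}\cdots E^{\leq K}A_{i_n})$, and invoke condition~(a) to bound $n$ and hence the number of such triples. The only cosmetic differences are that the paper bounds the number of paths via receiver set-finiteness rather than your finite subalphabet $\CA_0$, and tracks the sets $C$ explicitly as intersections of iterated relative ranges $\bar r(A_{i_1}\af_1A_{i_2}\cdots A_{i_n})$ rather than appealing to a Boolean closure (your reduction of these iterated ranges to intersections of simple $r(A_i,\gm)$'s implicitly uses the weakly left-resolving hypothesis, which is a standing assumption).
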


\begin{thm} {\rm (Theorem~\ref{thm-repeatablepath})}
Let $C^*(E,\CL,\bEz)=C^*(s_a,p_A)$ be the 
$C^*$-algebra of a labeled graph $(E,\CL)$ with no sinks or sources. 
Let $C^*(E,\CL,\bEz)$ have a repeatable path $\af\in \CL^*(E)$.   
If $p_{r(\af^m)}$ does not belong to the ideal generated by a projection 
$p_{r(\af^m)\setminus r(\af^{m+1})}$ for some $m\geq 1$, 
$C^*(E,\CL,\bEz)$ is not AF. 
\end{thm}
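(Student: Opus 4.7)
The plan is to pass to the quotient $B:=C^*(E,\CL,\bEz)/J$, where $J$ is the ideal generated by $p_{r(\af^m)\setminus r(\af^{m+1})}$, and to exhibit an infinite projection in $B$, thereby obstructing the AF property. With $\pi$ denoting the quotient map, the decomposition $p_{r(\af^m)}=p_{r(\af^{m+1})}+p_{r(\af^m)\setminus r(\af^{m+1})}$ together with the hypothesis yields $q:=\pi(p_{r(\af^m)})=\pi(p_{r(\af^{m+1})})\neq 0$.

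The labeled-space covariance $p_A s_\af=s_\af p_{r(A,\af)}$, applied with $A=r(\af^m)$ and using $r(r(\af^m),\af)=r(\af^{m+1})$, gives $p_{r(\af^m)}s_\af=s_\af p_{r(\af^{m+1})}$. Set $v:=\pi(s_\af p_{r(\af^{m+1})})=\pi(p_{r(\af^m)}s_\af)$. A direct check shows $v\in qBq$ with $v^*v=\pi(p_{r(\af^{m+1})}p_{r(\af)}p_{r(\af^{m+1})})=q$ and $vv^*\leq q$, so $v$ is a partial isometry in the unital corner $qBq$ whose initial projection is the unit $q$. The crux is to verify $vv^*\lneq q$ strictly; granting this, the relation $q\sim vv^*\lneq q$ exhibits $q$ as an infinite projection in $B$, so $B$ is not stably finite, and, since AF algebras are stably finite and quotients of AF algebras are AF, $C^*(E,\CL,\bEz)$ cannot be AF.

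The remaining step $vv^*\lneq q$ amounts to showing $p_{r(\af^m)}-s_\af p_{r(\af^{m+1})}s_\af^*\notin J$. I would establish this by expanding $p_{r(\af^m)}$ using the Cuntz--Krieger-type identity $p_A=\sum_{a\in L_1(A)}s_ap_{r(A,a)}s_a^*$ (available because $E$ has no sinks), isolating the contribution coming from the first label of $\af$, and arguing that the remaining exit summands cannot be absorbed by the principal ideal $J$, whose generator is an end-vertex projection rather than an outgoing-label projection. Reconciling these two flavors of ``exit'' is the main obstacle: it is delicate when $|\af|>1$ (so the Cuntz--Krieger term $s_{\af_1}p_{r(r(\af^m),\af_1)}s_{\af_1}^*$ strictly dominates $s_\af p_{r(\af^{m+1})}s_\af^*$ and an iteration across the letters of $\af$ is needed), and in the degenerate sub-case where $r(\af^m)$ emits only $\af_1$; in the latter sub-case the fallback is the gauge action $\gm_z(v)=z^{|\af|}v$, which descends to $B$ because $J$ is generated by a gauge-fixed vertex projection, forces $\sigma(v)=\T$ and identifies $C^*(q,v)\cong C(\T)$, after which the tower $\{\pi(s_{\af^n})\}_{n\geq m}$ produced by the repeatable path is used to locate a nonzero $K_1$-class in $B$.
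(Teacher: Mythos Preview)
Your overall architecture—pass to the quotient $B=C^*(E,\CL,\bEz)/J$, form $v=\pi(s_\af p_{r(\af^{m+1})})\in qBq$ with $v^*v=q$ and $vv^*\leq q$, and exploit the dichotomy ``proper isometry versus unitary''—is exactly what the paper does. The gap is in how you handle that dichotomy. You attempt to \emph{prove} $vv^*\lneq q$ via a Cuntz--Krieger expansion of $p_{r(\af^m)}$, relegating the unitary possibility to the ``degenerate sub-case where $r(\af^m)$ emits only $\af_1$.'' That characterization is wrong: the equality $vv^*=q$ means $p_{r(\af^m)}-s_\af p_{r(\af^{m+1})}s_\af^*\in J$, and this can certainly occur even when $r(\af^m)$ emits several labels, provided the extra summands $s_b\, p_{r(r(\af^m),b)}\, s_b^*$ (and the deeper ones along $\af$) happen to lie in $J$. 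Nothing prevents the ``end-vertex'' ideal $J$ from absorbing such ``outgoing-label'' exit terms, so your expansion argument cannot close the gap, and the sub-case you single out does not exhaust the situations in which $v$ is unitary.

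The clean fix is simply not to try: split into two cases at the outset. If $vv^*\neq q$ then $vv^*\lneq q$ (you already have $\leq$), so $q\sim vv^*\lneq q$ is infinite and $B$ is not AF. If $vv^*=q$ then $v$ is a unitary of the corner $qBq$; since $J$ is generated by a vertex projection it is gauge-invariant, the action descends, $\gm_z(v)=z^{|\af|}v$, and the Evans--Sims lemma forces $v$ out of the connected component of $q$ in the unitary group, so $qBq$ (hence $B$, hence $C^*(E,\CL,\bEz)$) is not AF. This is precisely the paper's argument; your $K_1$ remark is a correct but roundabout formulation of the same obstruction, and the tower $\{\pi(s_{\af^n})\}$ is not needed. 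One structural difference worth noting: the paper first reduces to $m=1$ by setting $\dt:=\af^m$ and invoking a separate lemma (characterizing when $p_A\in I_B$ in terms of relative ranges) to verify $p_{r(\dt)}\notin I_{r(\dt)\setminus r(\dt^2)}$; your direct treatment of general $m$ avoids that reduction and works just as well once the case split above is made explicit.
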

 
\vskip 1pc 
\noindent
Theorem~\ref{thm-loopAF} and Theorem~\ref{thm-AF} show that 
  (c) $\Rightarrow$ (d) and (a) $\Rightarrow$ (c)  
hold true, respectively, and 
Theorem~\ref{thm-repeatablepath} can be regarded as a partial result 
for (c) $\Rightarrow$ (b). 
The converse of Theorem~\ref{thm-loopAF} 
may not be true (Example~\ref{ex-AF-noloop}(iii)); 
(d) $\nRightarrow$ (c), in general. 
For other implications, we show in Example~\ref{ex-Morse} that 
there is a labeled space $(E,\CL,\bEz)$  with no repeatable paths 
whose $C^*$-algebra $C^*(E,\CL,\bEz)$ is not AF. 
For this, we use the fact that the Morse sequence 
does not contain a block  of the form $\af\af\af'$ for 
a path $\af$ and its initial path $\af'$; thus (b) $\nRightarrow$ (c), in general. 
Furthermore the labeled space of Example~\ref{ex-Morse} does not satisfy 
the condition (a); thus (b) $\nRightarrow$ (a).

To prove Theorem~\ref{thm-loopAF}, 
we need a notion of exit of a loop in a labeled space.
There are, unlike in graph case, three possible types of exits 
 of a loop (Definition~\ref{dfn-loop}).    
We show  in Proposition~\ref{prop-loop-exit} that if  $(E,\CL,\bEz)$ has a loop 
$\af$ with an exit of any type, its associated $C^*$-algebra $C^*(E,\CL,\bEz)$ 
has an infinite projection, 
which extends the same result for graph $C^*$-algebras \cite{KPR}.
After the proofs of our main results are completed, 
we finally discuss relations between disagreeable labeled spaces and
 labeled spaces satisfying some of the conditions (a)-(d) to make 
 the conditions to be understood more clearly. 
 
 The paper is organized as follows. 
In Section 2, we give necessary background on graph and labeled graph 
$C^*$-algebras. Then we begin Section 3  with  characterizations of 
existence of a loop in a directed graph so that they can be directly extended  
to labeled spaces. From this, we define a notion of loop for labeled space. 
The main results on  AF labeled graph $C^*$-algebras are obtained in Section 4.  

\vskip 1pc

\section{Preliminaries}
 
\subsection{\bf Directed graphs and labeled spaces}
We use notational conventions of \cite{KPR} for graphs and
graph $C^*$-algebras and of \cite{BP2} for
labeled spaces and their $C^*$-algebras.
By a  {\it directed graph} we mean a quadruple $E=(E^0,E^1,r_E,s_E)$
consisting of a countable set of
vertices  $E^0$, a countable set of edges $E^1$,
and the range, source maps $r_E$, $s_E: E^1\to E^0$
 (we often write $r$ and $s$ for $r_E$ and $s_E$, respectively).
If a vertex $v\in E^0$ emits (receives, respectively) no edges, 
$v$ is called a {\it sink} ({\it source}, respectively).  
$E^0_\sk$  denotes the set of all sinks of $E$ and 
$E^n$ denotes the set of all finite paths $\ld=\ld_1\cdots \ld_n$
 of {\it length} $n$ ($|\ld|=n$),
($\ld_{i}\in E^1,\ r(\ld_{i})=s(\ld_{i+1}), 1\leq i\leq n-1$).
We write $E^{\leq n}$ and $E^{\geq n}$  
for the sets $\cup_{i=1}^n E^i$ and $\cup_{i=n}^\infty E^i$, respectively.
The maps $r$ and $s$ naturally extend to $E^{\geq 0}$, where 
$r(v)=s(v)=v$ for $v\in E^0$. 

 For a vertex subset $A\subset E^0$,
 $A_\sk$ denotes  the  sinks $A\cap E^0_\sk$ in $A$, and 
for $\mathcal B\subset 2^{E_0}$ we simply denote the set
$\{A_\sk:\, A\in \mathcal B\}$ by $\mathcal B_\sk$.
Also with abuse of notation, for $\mathcal B\subset 2^{E_0}$ and $A\subset E_0$, 
we write 
$$\CB\cap A:=\{ B\in \CB:\, B\subset A\}.$$  
 
 A {\it labeled graph} $(E,\CL)$ over a countable alphabet  $\CA$
 consists of a directed graph $E$ and
 a {\it labeling map} $\CL:E^1\to \CA$.
 We assume that $\CL(E^1)=\CA$. 
 Let $\CA^*$ and $\CA^\infty$ be the sets of all finite sequences
 (of length greater than or equal to $1$)
 and infinite sequences with terms in $\CA$, respectively.
 Then $\CL(\ld):=\CL(\ld_1)\cdots \CL(\ld_n)\in \CA^*$
 for $\ld=\ld_1\cdots \ld_n\in E^n$, and
 $\CL(\dt):=\CL(\dt_1)\CL(\dt_2)\cdots\in \CL(E^\infty)\subset\CA^\infty$
 for $\dt=\dt_1\dt_2\cdots \in E^\infty$.
We use notation $\CL^*(E):=\CL(E^{\geq 1})$.
For $\af=\af_1\af_2\cdots\af_{|\af|}\in \CL^*(E)$,
we denote the subsegment $\af_i\cdots \af_j$ of $\af$
by $\af_{[i,j]}$ for $1\leq i\leq j\leq |\af|$.
A subsegment of the form $\af_{[1,j]}$ is called
an {\it initial path} of $\af$.
 The {\it range} $r(\af)$ and {\it source} $s(\af)$
 of a labeled path $\af\in \CL^*(E)$ are
 subsets of $E^0$ defined by
\begin{align*}
r(\af) &=\{r(\ld) \,:\, \ld\in E^{\geq 1},\,\CL(\ld)=\af\},\\
 s(\af) &=\{s(\ld) \,:\, \ld\in E^{\geq 1},\, \CL(\ld)=\af\}.
\end{align*}
 The {\it relative range of $\af\in \CL^*(E)$
 with respect to $A\subset 2^{E^0}$} is defined to be
$$
 r(A,\af)=\{r(\ld)\,:\, \ld\in E^{\geq 1},\ \CL(\ld)=\af,\ s(\ld)\in A\}.
$$
If $\CB\subset 2^{E^0}$ is a collection of subsets of $E^0$ such that
 $r(A,\af)\in \CB$ whenever $A\in \CB$ and $\af\in \CL^*(E)$,
 $\CB$ is said to be
 {\it closed under relative ranges} for $(E,\CL)$.
 We call $\CB$ an {\it accommodating set} for $(E,\CL)$
 if it is closed under relative ranges,
 finite intersections and unions and contains $r(\af)$ for all $\af\in \CL^*(E)$.
 A set $A\in \CB$ is called {\it minimal} (in $\CB$)  
 if $A$ does not have any proper subset in $\CB$. 

  If $\CB$ is accommodating for $(E,\CL)$, the triple $(E,\CL,\CB)$ is called
 a {\it labeled space}. 
 For $A,B\in 2^{E^0}$ and $n\geq 1$, let
 $$ AE^n =\{\ld\in E^n\,:\, s(\ld)\in A\},\ \
  E^nB=\{\ld\in E^n\,:\, r(\ld)\in B\},$$
 and  $AE^nB=AE^n\cap E^nB$.
 We write $E^n v$ for $E^n\{v\}$ and $vE^n$ for $\{v\}E^n$,
 and will use notation like $AE^{\geq k}$ and $vE^\infty$
 which should have their obvious meaning.
For convenience we also take  conventions like  
$AE^0=A$ and $\CL(A)=A$ for $A\in \CB$, etc.

\vskip 1pc

\begin{notation}\label{notation-paths} 
For $A_i\in \CB$, $1\leq i\leq n$, and $K\geq 1$, we will use the following notation  
$$A_1 E^{\leq K}A_2\cdots E^{\leq K} A_{n+1}$$ 
for the set
$\{ x_1 \cdots x_{n}\in E^{\geq 1}:\, x_i\in A_iE^{\leq K}A_{i+1},\ 1\leq i\leq n\,\}$ 
 of paths  in $E^*$. 
 To stress the fact that a path $x=x_1\cdots x_n$ belongs to 
 $A_1 E^{\leq K}A_2\cdots E^{\leq K} A_{n+1}$, 
 we may write $A_1 x_1 A_2\cdots x_n A_{n+1}$ for $x$.
\end{notation} 
 
\vskip 1pc

 A labeled space $(E,\CL,\CB)$ is said to be {\it set-finite}
 ({\it receiver set-finite}, respectively) if for every $A\in \CB$ and $l\geq 1$ 
 the set  $\CL(AE^l)$ ($\CL(E^lA)$, respectively) is finite.
  A labeled space $(E,\CL,\CB)$ is {\it weakly left-resolving} if
 $$r(A,\af)\cap r(B,\af)=r(A\cap B,\af)$$
 holds for all $A,B\in \CB$ and  $\af\in \CL^*(E)$.
 A labeled space $(E,\CL)$ is  {\it left-resolving} if the
map $ \CL : r^{-1}(v) \rightarrow \mathbf{\CA}$ is injective for each $v \in E^0$, and
{\it label-finite} if $|\CL^{-1}(a)| < \infty $ for each $a \in \CA$.
 If $(E,\CL)$ is left-resolving, then
it is  label-finite if and only if $r(a)$ is 
finite for all $a\in \CA$.

By $\Om_0(E)$ we denote the set of all vertices of $E$ that are not sources.
For $v,w\in \Om_0(E)\subset E^0$,
we write $v\sim_l w$ if $\CL(E^{\leq l} v)=\CL(E^{\leq l} w)$
as in \cite{BP2}.
Then  $\sim_l $ is an equivalence relation on the set $\Om_0(E)$.
The equivalence class $[v]_l$ of $v$  is called a {\it generalized vertex}.
Let $\Om_l(E):=\Om_0(E)/\hskip -.4pc\sim_l$ for $l\geq 1$.
   If $k>l$,  $[v]_k\subset [v]_l$ is obvious and
   $[v]_l=\cup_{i=1}^m [v_i]_{l+1}$
   for some vertices  $v_1, \dots, v_m\in [v]_l$ (\cite[Proposition 2.4]{BP2}). 

\vskip 1pc

\noindent
{\bf Assumptions.}\label{assumptions-sink}
 We assume that a labeled space $(E,\CL,\CB)$
 considered in this paper  always satisfies the following:
\begin{enumerate}
\item[(i)] $(E,\CL,\CB)$ is weakly left-resolving.
\item[(ii)] $(E,\CL,\CB)$  is set-finite and receiver set-finite.
\end{enumerate}

\vskip 1pc
\noindent
Also we assume that if $v\in E^0$ is a  sink, it is not a source. 
A labeled space $(E,\CL,\CB)$ is said to be {\it finite} 
if there are only finitely many generalized vertices $[v]_l$
for each $l\geq 1$, or equivalently 
there are only finitely many labels.

\vskip 1pc

We denote  by $\CE^0$ the smallest accommodating set for $(E,\CL)$ (cf. \cite[p.108]{BP1});
$$\CE^0=\{\cup_{k=1}^m\cap_{i=1}^n  r(\bt_{i,k}):\, \bt_{i,k}\in \CL^*(E)\},
$$ 
and by $\CEa$ the smallest  accommodating set  containing   
$\{r(\alpha): \alpha \in \CL^*(E)\}$ and $\{\{v\}: v\text{ is a sink or a source}\}$.

If $E$ has no sinks or sources, $\CEa =\CE^0$ and  
 every set in $\CE^0$ can be expressed as a finite union of
generalized vertices (\cite[Remark 2.1 and Proposition 2.4.(ii)]{BP2});
$$
\CE^0\subseteq \{\cup_{i=1}^n [v_i]_l:\,  v_i\in \Om_0(E),\  n,l\geq 1\}.
$$
Generalized vertices $[v]_l$ are not always members of the accommodating set 
$\CE^0$ but always the relative complements of sets in $\CE^0$, namely  
$[v]_l=X_l(v)\setminus r(Y_l(v))$,  where
$X_l(v), \,  Y_l(v)$ are given by
$$X_l(v):=\cap_{\af\in \CL(E^{\leq l} v)} r(\af)\ \text{ and }\
Y_l(v):=\cup_{w\in X_l(v)} \CL(E^{\leq l} w)\setminus \CL(E^{\leq l} v)$$
so that  $X_l(v), \,  r(Y_l(v))\in \CE^0$ (\cite[Proposition 2.4]{BP2}).
One can easily check that 
the expression $[v]_l=X_l(v)\setminus r(Y_l(v))$ is valid even for a sink $v$ 
and $[v]_l \cap  r(Y_l(v)) =\emptyset$. 

The accommodating set $\CE^0$ is not necessarily closed under relative complements.
On the other hand, in the construction of  the $C^*$-algebra $C^*(E,\CL,\CB)$
(\cite{BP1, BP2}) of a labeled space $(E,\CL,\CB)$,  
to each nonempty set $A\in \CB$ there is associated  
a nonzero projection $p_A$  in $C^*(E,\CL,\CB)$ in such a manner that
$p_A\leq p_B$ whenever  $A\subset B$.
Hence  $p_B-p_A$ belongs
to $C^*(E,\CL,\CB)$ and it seems reasonable to write $p_{B\setminus A}$ for $p_B-p_A$,  
which leads us to consider accommodating sets that are closed under relative complements.

\vskip 1pc

\begin{notation} Let $(E,\CL)$ be a labeled graph.
\begin{enumerate}

\item[(i)] For a labeled space $(E,\CL,\mathcal B)$, we denote by $\overline{\mathcal B}$ 
the smallest accommodating set that contains 
$\mathcal B\cup  \mathcal B_\sk$ and is closed under relative complements. 
The existence of $\overline{\mathcal B}$ clearly follows 
from considering the intersection of all those 
accommodating sets. 
 $\bEz$ will thus denote the smallest accommodating set 
that is closed under relative complements and contains the sets 
in $\bEz_\sk=\{A_\sk:\, A\in \bEz\}$.

\item[(ii)] As in \cite{BCP}, $\CL^\#(E)$ will denote the union of all labeled paths $\CL^*(E)$ 
and empty word $\epsilon$, 
where  $\epsilon$ is a symbol such that   
$r(\epsilon) =E^0$, $r(A, \epsilon) = A$ for all $A \subset E^0$.

\item[(iii)] If $\CL$ is the identity map $id: E^1\to E^1$,    
it is called the {\it trivial labeling} and will be denote by 
$\CL_{id}$. For a labeled graph $(E,\CL_{id})$, the accommodating set 
$\bEz$ is equal to the collection of all finite subsets of $E^0$. 
\end{enumerate}
\end{notation}

\vskip 1pc

\begin{prop}\label{prop-barE} Let $(E,\CL)$ be a labeled graph and $A\in \bEz$. 
Then $A$ is of the form
$$ A=\big(\cup_{i=1}^{n_1}  [v_i]_l\big) \cup \big(\cup_{j=1}^{n_2}([u_j]_l)_\sk \big)
 \cup \big(\cup_{k=1}^{n_3}  [w_k]_l\setminus ([w_k]_l)_\sk \big)
$$
for some $v_i, u_j, w_k \in \Om_0(E)$ and $l\geq 1$, $n_1, n_2, n_3\geq 0$.
\end{prop}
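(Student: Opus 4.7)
The plan is to let $\mathcal{C}$ denote the collection of all subsets of $E^0$ that can be written in the stated form for some $l\geq 1$, and to prove $\bEz=\mathcal{C}$. The inclusion $\mathcal{C}\subset\bEz$ is direct: since $[v]_l=X_l(v)\setminus r(Y_l(v))$ with $X_l(v),r(Y_l(v))\in\CE^0\subset\bEz$ and $\bEz$ is closed under relative complements, each $[v]_l$ lies in $\bEz$; the sink part satisfies $([v]_l)_\sk=[v]_l\cap(X_l(v))_\sk$, with $(X_l(v))_\sk\in\CE^0_\sk\subset\bEz$, so $([v]_l)_\sk\in\bEz$; and then $[v]_l\setminus([v]_l)_\sk\in\bEz$ by another relative complement. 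The substance of the proof is the reverse inclusion $\bEz\subset\mathcal{C}$, for which, by minimality in the definition of $\bEz$, it suffices to check that $\mathcal{C}$ is an accommodating set which is closed under relative complements and under the operation $A\mapsto A_\sk$.

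The first technical step is a level-normalization principle. Because $\{[v]_l:v\in\Om_0(E)\}$ partitions $\Om_0(E)$ for each $l$ and refines under $l\mapsto l+1$ via $[v]_l=\bigcup_i[v_i]_{l+1}$ (Proposition~2.4 of \cite{BP2}), any element of $\mathcal{C}$ written at level $l$ can be rewritten at any larger level $l'$ by replacing each building block with the union of its level-$l'$ refinements; the sink and non-sink parts refine compatibly, so the tripartite form is preserved. Once two sets of $\mathcal{C}$ are normalized to a common level $l$, their constituent blocks become a finite family of pairwise disjoint atoms, since distinct equivalence classes are disjoint and within a single class $[v]_l=([v]_l)_\sk\sqcup([v]_l\setminus([v]_l)_\sk)$. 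Consequently, finite union, intersection, and relative complement in $\mathcal{C}$ reduce to Boolean operations on a disjoint family of atoms and remain in $\mathcal{C}$, and $A_\sk\in\mathcal{C}$ for $A\in\mathcal{C}$ is obtained blockwise using $([v]_l\setminus([v]_l)_\sk)_\sk=\emptyset$.

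It remains to verify that $\mathcal{C}$ contains every $r(\af)$ with $\af\in\CL^*(E)$ and is closed under relative ranges. The first follows from Proposition~2.4 of \cite{BP2}, which expresses $r(\af)\in\CE^0$ as a finite union of generalized vertices at any level $l\geq|\af|$, using that $r(\af)\subset\Om_0(E)$. For closure under relative ranges, given $A\in\mathcal{C}$ at a common level $l$ and $\af\in\CL^*(E)$, I would compute $r(A,\af)$ atomwise: sink atoms contribute the empty set (since they emit no edges), while the remaining atoms are subsets of some $r(\bt)$ with $\bt\in\CL^*(E)$, so that by the weakly left-resolving hypothesis each atomic relative range lies in $\CE^0$ and re-decomposes as a union of generalized vertices at level $l+|\af|$. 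The main obstacle throughout is the bookkeeping of the level index when operations combine sets built from different levels — every operation may require re-refining to a common level while preserving the sink/non-sink tripartition — but once this is handled systematically via the refinement identity $[v]_l=\bigcup_i[v_i]_{l+1}$, all the accommodating-set axioms fall into place and minimality of $\bEz$ forces $\bEz\subset\mathcal{C}$.
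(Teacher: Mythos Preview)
Your overall architecture matches the paper's exactly: define $\mathcal{C}$ to be the class of sets of the displayed form, observe $\mathcal{C}\subset\bEz$ because $[v]_l=X_l(v)\setminus r(Y_l(v))$, and then prove $\bEz\subset\mathcal{C}$ by checking that $\mathcal{C}$ is an accommodating set closed under relative complements (and under $A\mapsto A_\sk$). Your level-normalization argument for the Boolean closure properties is more explicit than the paper's, which simply says ``it is easy to see.''

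There is, however, a gap in your treatment of relative ranges. You assert that because each non-sink atom is a subset of some $r(\bt)$, the weakly left-resolving hypothesis forces $r([v]_l,\af)\in\CE^0$. That inference is not valid: weak left-resolving controls intersections, not subsets, and in general $r([v]_l,\af)$ need not lie in $\CE^0$ (which is not closed under relative complements). The paper's argument here is the correct one and is worth internalizing: since $[v]_l\cap r(Y_l(v))=\emptyset$, weak left-resolving gives $r([v]_l,\af)\cap r(r(Y_l(v)),\af)=r(\emptyset,\af)=\emptyset$, and hence
\[
r([v]_l,\af)=r\bigl(X_l(v)\setminus r(Y_l(v)),\af\bigr)=r(X_l(v),\af)\setminus r\bigl(r(Y_l(v)),\af\bigr),
\]
a difference of two sets in $\CE^0\subset\mathcal{C}$, which lands in $\mathcal{C}$ by the relative-complement closure you already established. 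This also covers the other atom types, since $r(([v]_l)_\sk,\af)=\emptyset$ and $r([v]_l\setminus([v]_l)_\sk,\af)=r([v]_l,\af)$. With this one computation substituted for your hand-wave, your proof is complete and coincides with the paper's.
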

\begin{proof} Let $\CB$ be the set of all such $A$'s.
Then $\CB\subset \bEz$ is obvious
since $\bEz$ contains all generalized vertices.
Now it suffices to show that $\CB$ is an accommodating set that is
closed under relative complements.
By the proof of \cite[Proposition 2.4]{BP2},
$r(\af)\in\CB$ for all labeled paths $\af\in \CL^*(E)$.
It is easy to see that $\CB$ is closed under finite unions, finite intersections and
relative complements.

In order to show that $\CB$ is closed under
relative ranges, it suffices to see that
$r([v]_l,\af)\in \CB$ for $v\in \Om_0(E)$ and $\af\in \CL^*(E)$.
Since $r([v]_l,\af)\cap r( r(Y_l(v)),\af)=r([v]_l\cap r(Y_l(v)),\af)
=r(\emptyset,\af)=\emptyset$,
we have
$$r([v]_l,\af)=r\big(X_l(v)\setminus r(Y_l(v)), \af\big)
=r(X_l(v),\af)\setminus r\big( r(Y_l(v)),\af\big)$$
which belongs to $\CB$
since $r(X_l(v),\af),\, r( r(Y_l(v)),\af)\in \CE^0 \subset \CB$ and
$\CB$ is closed under relative complements.
\end{proof}

\vskip 1pc

\subsection{Labeled graph $C^*$-algebras}

\begin{dfn} (cf. \cite[Definition 4.1]{BP1} and \cite[Remark 3.2]{BP2})
\label{def-representation}
Let $(E,\CL,\CB)$ be a labeled space such that
$\bEz\subset \CB$.
A {\it representation} of $(E,\CL,\CB)$
consists of projections $\{p_A\,:\, A\in \CB\}$ and
partial isometries
$\{s_a\,:\, a\in \CA\}$ such that for $A, B\in \CB$ and $a, b\in \CA$,
\begin{enumerate}
\item[(i)]  $p_{\emptyset}=0$, $p_Ap_B=p_{A\cap B}$, and
$p_{A\cup B}=p_A+p_B-p_{A\cap B}$,
\item[(ii)] $p_A s_a=s_a p_{r(A,a)}$,
\item[(iii)] $s_a^*s_a=p_{r(a)}$ and $s_a^* s_b=0$ unless $a=b$,
\item[(iv)] for each $A\in \CB$,
$$p_A=\sum_{a\in \CL(AE^1)} s_a p_{r(A,a)}s_a^* + p_{{}_{A_\sk}}.$$
\end{enumerate}
\end{dfn}

\vskip 1pc

\begin{remark}\label{remark-elements}
Let $(E,\CL,\CB)$ be a labeled space such that $\bEz\subset \CB$.
\begin{enumerate}
\item[(i)] The proof of \cite[Theorem 4.5]{BP1} shows that 
there exists a $C^*$-algebra $C^*(E,\CL,\CB)$
generated by a universal representation
$\{s_a,p_A\}$ of $(E,\CL,\CB)$; 
we need to modify the proof slightly, namely
we should mod out the $*$-algebra $k_{(E,\CL,\CB)}$ by the ideal
$J$ generated by the elements
$q_{A\cup B}-q_A -q_B+ q_{A\cap B}$ and
$q_A-\sum_{a\in \CL(AE^1)} s_a q_{r(A,a)}s_a^* - q_{A_\sk}$  for $A,\, B\in \CB$.
If $\{s_a,p_A\}$ is a universal representation of $(E,\CL,\CB)$,
we simply write $C^*(E,\CL,\CB)=C^*(s_a,p_A)$
and call $C^*(E,\CL,\CB)$ the {\it labeled graph $C^*$-algebra} of
a labeled space $(E,\CL,\CB)$.
 Note that   $s_a\neq 0$ and $p_A\neq 0$ for $a\in \CA$
and  $A\in \CB$, $A\neq \emptyset$, and that
$s_\af p_A s_\bt^*\neq 0$ if and only if $A\cap r(\af)\cap r(\bt)\neq \emptyset$.
By Definition~\ref{def-representation}(iv)
and \cite[Lemma 4.4]{BP1} saying that with 
$s_\af:=p_\af$ for $\af\in \CB$,
$$(s_\af p_{A} s_\bt^*)(s_\gm p_{B} s_\dt^*)=\left\{
                      \begin{array}{ll}
                        s_{\af\gm'}p_{r(A,\gm')\cap B} s_\dt^*, & \hbox{if\ } \gm=\bt\gm' \\
                        s_{\af}p_{A\cap r(B,\bt')} s_{\dt\bt'}^*, & \hbox{if\ } \bt=\gm\bt'\\
                        s_\af p_{A\cap B}s_\dt^*, & \hbox{if\ } \bt=\gm\\
                        0, & \hbox{otherwise,}
                      \end{array}
                    \right.
$$ for $\af,\bt,\gm,\dt\in  \CL^{\#}(E)$ and $A,B\in \CB$,
it follows that  
\begin{eqnarray}\label{eqn-elements}
C^*(E,\CL,\CB)=\overline{\rm span}\{s_\af p_A s_\bt^*\,:\,
\af,\,\bt\in  \CL^{\#}(E),\ A\in \CB\}, 
\end{eqnarray}  
where $s_\epsilon$ denotes the unit of 
the multiplier algebra of $C^*(E,\CL,\mathcal B)$ \cite{BCP}. 
It is observed in \cite{JKP} that if $E$ has no sinks nor sources, then 
$$C^*(E,\CL,\CEa)\cong C^*(E,\CL,\bEz).$$
(We wrote $\overline\CE$ for $\bEz$ in the paper \cite{JKP}.)

\item[(ii)] Universal property of  $C^*(E,\CL,\CB)=C^*(s_a, p_A)$ 
defines a strongly continuous action
$\gm:\mathbb T\to Aut(C^*(E,\CL,\CB))$,  called
the {\it gauge action}, such that
$$\gm_z(s_a)=zs_a \ \text{ and } \  \gm_z(p_A)=p_A$$ 
for $a\in \CL(E^1)$ and $A\in \CB$.

\item[(iii)] From Definition~\ref{def-representation}(iv),
 we have for each $n\geq 1$,
$$ p_A=\sum_{\af\in\CL(AE^n)}s_\af  p_{r(A,\af)}s_\af^* +
    \sum_{\gm\in\CL(AE^{\leq n-1})} s_\gm p_{r(A,\gm)_\sk} s_\gm^*,$$
where $\sum_{\gm\in\CL(AE^0)} s_\gm p_{r(A,\gm)_\sk} s_\gm^*:= p_{{}_{A_\sk}}$.
In fact,
\begin{align*}
    p_A   
    =\ &\sum_{a\in \CL(AE^1)} s_a p_{r(A,a)}s_a^* +p_{{}_{A_\sk}} \\
    =\ &\sum_{a\in \CL(AE^1)} s_a \Big(\sum_{b\in \CL(r(A,a)E^1)} s_b p_{r(A,ab)}s_b^*
    + p_{r(A,a)_\sk} \Big)s_a^* + p_{{}_{A_\sk}} \\
    =\ &\sum_{\gm\in\CL(AE^2)}s_\gm  p_{r(A,\gm)}s_\gm^* +
    \sum_{a\in \CL(AE^1)}s_a p_{r(A,a)_\sk} s_a^*  + p_{{}_{A_\sk}}\\
   =\ &\sum_{\gm\in\CL(AE^2)}s_\gm \Big(\sum_c s_c p_{r(A,\gm c)}s_c^*
    +  p_{r(A,\gm)_\sk} \Big)s_\gm^*\\
    &\ \ \ + \sum_{a\in \CL(AE^1)}s_a p_{r(A,a)_\sk} s_a^*  + p_{{}_{A_\sk}}\\
    =\ & \sum_{\af\in\CL(AE^3)} s_\af  p_{r(A,\af)}s_\af^* +
    \sum_{\gm\in\CL(AE^2)} s_\gm p_{r(A,\gm)_\sk} s_\gm^*\\  
    &\ \ \ + \sum_{a\in \CL(AE^1)}s_a p_{r(A,a)_\sk} s_a^*  +  p_{{}_{A_\sk}}\\
    =\ &\ \cdots\\
    =\ & \sum_{\af\in\CL(AE^n)}s_\af  p_{r(A,\af)}s_\af^* +
    \sum_{\gm\in\CL(AE^{\leq n-1})} s_\gm p_{r(A,\gm)_\sk} s_\gm^*.
\end{align*}

\item[(iv)] For $B\in \bEz$, one can easily show that 
the ideal  $I_B$  of $C^*(E,\CL,\bEz)=C^*(s_a,p_A)$ generated by 
the projection $p_B$ is equal to
\begin{eqnarray}\label{eqn-ideal-elements}
I_B=\overline{\rm span}\{s_\af p_A s_\bt^*\,:\,
\af,\,\bt\in  \CL^{\#}(E),\ A\in \bEz\cap r(\CL(BE^{\geq 0}))\,\},
\end{eqnarray}
where $r(\CL(BE^{0})):=B$.
\end{enumerate}
\end{remark}

\vskip 1pc

\section{Loops in labeled spaces}

\subsection{Loops in directed graphs} 
Recall that a path $x \in E^{\geq 1}$ in a directed graph $E$ 
is called a {\it loop} (or a {\it directed cycle}) if 
$s(x)=r(x)$, that is, if it comes back to its source vertex. 
It is well known \cite[Theorem 2.4]{KPR} 
that for a graph $C^*$-algebra $C^*(E)$ to be AF 
it is a sufficient and necessary condition that  $E$ has no loops. 

Since the accommodating set $\bEz$ of a labled graph $(E,\CL_{id})$ 
with the trivial labeling $\CL_{id}$  
contains all the single vertex sets $\{v\}$, $v\in E^0$,  
the following are equivalent  for a path 
$x=x_1\cdots x_m\in E^{\geq 1}(=\CL_{id}^*(E))$: 
\begin{enumerate} 
\item[(i)] $x$ is a loop in $E$,
\item[(ii)] $\{r(x)\}=r(\{r(x)\}, x)$,
\item[(iii)] $x$ is {\it repeatable}, that is, $x^n\in E^{\geq 1}$ for all $n\geq 1$, 
\item[(iv)]  
 $(A_{1}x_1 A_{2}x_2\cdots A_m x_m)^n(A_{1}x_1 A_{2}x_2\cdots A_i x_i) 
 \in \CL_{id}^*(E)$ for all $n\geq 1$ and $1\leq i\leq m$, 
 where $A_{i}= \{ s(x_i)\}\in \bEz$. 
 (See Notation~\ref{notation-paths} for the meaning of 
 $A_{1}x_1 A_{2}x_2\cdots A_m x_m$.)  
\end{enumerate}
From this we can obtain several equivalent conditions 
for a graph $C^*$-algebra $C^*(E)$ to be AF as follows.

\vskip 1pc 

\begin{prop}\label{prop-AF graph algebra} 
Let $(E,\CL_{id},\bEz)$ be a labeled space with the trivial labeling $\CL_{id}$  
so that $C^*(E,\CL_{id},\bEz) \cong C^*(E)$. Then the following are equivalent: 
\begin{enumerate} 
\item[(i)] $C^*(E,\CL_{id},\bEz)$ is AF,
\item[(ii)] $E$ has no loops, 
\item[(iii)] $A\not\subset r(A,x)$ for all $A\in \bEz$ and $x\in \CL_{id}^*(E)$, 
\item[(iv)] there are no repeatable paths in $\CL_{id}^*(E)$, 
\item[(v)] if $\{A_1,\dots , A_m\}$ is a finite collection of sets from 
$\bEz$ and $K\geq 1$, there is an $m_0\geq 1$ such that 
$A_{i_1}E^{\leq K}A_{i_2}\cdots E^{\leq K}A_{i_{n+1}}= \emptyset$ for all $n> m_0$.
\end{enumerate}  
\end{prop}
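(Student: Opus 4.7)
The plan is to take the classical equivalence (i) $\iff$ (ii) from \cite[Theorem~2.4]{KPR} as given and verify that each of (iii), (iv), (v) is equivalent to (ii) by direct combinatorial arguments. Two features of the trivial labeling are used throughout: $\CL_{id}:E^1\to E^1$ is a bijection, so $\CL_{id}^*(E)=E^{\geq 1}$ and, by Notation~2.2(iii), $\bEz$ is exactly the collection of all finite subsets of $E^0$; and for any $x\in E^{\geq 1}$ the only $\lambda\in E^{\geq 1}$ with $\CL_{id}(\lambda)=x$ is $\lambda=x$ itself, so for $A\in\bEz$ one has $r(A,x)=\{r(x)\}$ when $s(x)\in A$ and $r(A,x)=\emptyset$ otherwise.

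The equivalences (ii) $\iff$ (iv) and (ii) $\iff$ (iii) are then almost immediate. For the first, repeatability of $x=x_1\cdots x_m$ requires the concatenation $x^2$ to be an edge-path in $E$, which already forces $r(x_m)=s(x_1)$, i.e., $x$ is a loop; conversely a loop $x$ obviously has $x^n\in E^{\geq 1}$ for all $n$. For the second, the description of $r(A,x)$ above shows that, for a nonempty $A\in\bEz$, the relation $A\subset r(A,x)$ can occur only when $A=\{r(x)\}$ and $s(x)=r(x)$, i.e., precisely when $E$ contains a loop.

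For (ii) $\iff$ (v), the direction (v) $\Rightarrow$ (ii) follows by contrapositive: a loop $x$ at a vertex $v$, together with $A_1=\{v\}\in\bEz$ and $K=|x|$, produces $x^n\in A_1E^{\leq K}A_1\cdots E^{\leq K}A_1$ (with $n+1$ copies of $A_1$) for every $n\geq 1$, so no uniform bound $m_0$ can exist. Conversely, assume $E$ has no loops and set $F=\bigcup_{i=1}^N A_i$, a finite subset of $E^0$. Any $x_1\cdots x_n\in A_{i_1}E^{\leq K}A_{i_2}\cdots E^{\leq K}A_{i_{n+1}}$ yields vertices $v_0=s(x_1),\, v_1=r(x_1),\,\ldots,\, v_n=r(x_n)$ all lying in $F$; as soon as $n\geq |F|$, pigeonhole furnishes $j<k$ with $v_j=v_k$, and then $x_{j+1}\cdots x_k$ is a loop in $E$, a contradiction. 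Thus $m_0=|F|$ witnesses (v).

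No substantial obstacle arises: the classical theorem supplies the $C^*$-algebraic equivalence (i) $\iff$ (ii), and the trivial labeling collapses $r(A,x)$ to at most one vertex, which makes both (iii) and the pigeonhole step for (v) transparent. The point of listing these five conditions together is to isolate, in the graph case, several formulations of the no-loop condition that can then be compared in the labeled setting of (a)--(d).
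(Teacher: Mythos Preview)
Your argument is correct and follows essentially the same route as the paper: both take the equivalence (i)\,$\Leftrightarrow$\,(ii) from \cite{KPR} as known, treat (ii)\,$\Leftrightarrow$\,(iii) and (ii)\,$\Leftrightarrow$\,(iv) as immediate from the description of $r(A,x)$ under the trivial labeling, and prove (ii)\,$\Leftrightarrow$\,(v) by the pigeonhole argument on the finite vertex set $F=\bigcup_i A_i$. The only difference is that you spell out the ``obvious'' implications in more detail than the paper does.
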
 
\begin{proof} 
We only prove that (ii) and  (v) are equivalent since the equivalence of 
(i) and (ii) is well known and the other implications are rather obvious. 
Suppose $x$ is a loop in $E$, then with $A=\{s(x)\}\in \bEz$ and $K:=|x|\geq 1$, 
it is immediate that (v) does not hold. 
For the converse, suppose that (v) dose not hold and so  there are 
finitely many sets $A_1, \dots, A_m$ in $\bEz$ and $K\geq 1$ such that 
$A_{i_1}E^{\leq K}A_{i_2}\cdots E^{\leq K}A_{i_{n+1}}\neq \emptyset$ 
for all $n\geq 1$. 
Since every set in $\bEz$ is finite, the number of 
vertices in $\cup_{i=1}^m A_i$ is also finite. 
Choose an integer $N$ with $|\cup_{i=1}^m A_i|<N$.   
Then for any path in 
$A_{i_1}E^{\leq K}A_{i_2}\cdots E^{\leq K}A_{i_{N+1}}(\neq \emptyset)$,   
there is a  vertex in $\cup_{i=1}^m A_i$ the path passes through at least two times,   
which proves the existence of a loop (at that vertex) in $E$. 
\end{proof}

\vskip 1pc 
 
\subsection{Loops in labeled spaces}       
If $(E,\CL_{id},\bEz)$ is a labeled space with the trivial labeling, 
by Proposition~\ref{prop-AF graph algebra}, 
the condition that there is a set $A\in \bEz$ satisfying  
$A\subset r(A,x)$ for a path $x$ (in fact, $A=\{s(x)\}=\{r(x)\}$) is equivalent to 
the existence of a loop in $E$.  
This equivalent condition can be extended to a labeled space as follows. 

\vskip 1pc

\begin{dfn}\label{dfn-loop} 
Let  $(E,\CL,\CB)$ be a labeled space and $\af\in \CL^*(E)$ be a labeled path.  
\begin{enumerate} 
\item[(a)] $\af$ is called a {\it generalized loop} at $A\in \CB$ 
if $\af\in \CL(AE^{\geq 1}A)$. 
\item[(b)] $\af$ is called a {\it loop} at $A\in \CB$ if it is a 
generalized loop such that $A\subset r(A,\af)$. 
\item[(c)] A loop  $\af$  at $A\in \CB$ has an {\it exit} if one of the following holds:
\begin{enumerate}
\item[(i)] $\{\af_{[1,k]}:\, 1\leq k\leq |\af| \}\subsetneq \CL(AE^{\leq |\af|})$,
\item[(ii)]  $r(A, \af_{[1,i]})_\sk\neq \emptyset$ for some $i=1, \dots, |\af|$,
\item[(iii)] $A\subsetneq r(A,\af)$.
\end{enumerate}
\end{enumerate}
\end{dfn}

\noindent Note that every loop  $\af$ is {\it repeatable}, that is, 
 $\af^n\in \CL^*(E)$ for all $n\geq 1$ (\cite[Definition 6.6]{BP2}), 
 and every repeatable path is a generalized loop at its range. 
 Not every repeatable path is a loop as we can see in Example~\ref{ex-AF-noloop}(iii). 

\vskip 1pc
 
\begin{remark} Let $(E,\CL,\CB)$ be a labeled space and $A\in \CB$.
\begin{enumerate}
\item[(i)] A generalized loop $\af$ at a minimal set  $A\in \CB$ is 
always a loop because $A\subset r(A,\af)$ follows from 
the minimality of $A$ since $\emptyset\neq A\cap r(A,\af)\subset A$.   
A labeled graph $(E,\CL)$ might have a loop $\af$ 
even when the graph $E$ itself 
has no loops at all 
as we will see in Example~\ref{ex-AF-noloop}(i) and (ii).

\item[(ii)] If $\af$ is a loop at $A$, 
then evidently $p_A\leq p_{r(A,\af)}$.
\end{enumerate}
\end{remark}

\vskip 1pc

\begin{ex} We give examples of labeled graphs with a loop that has an exit. 
 
\begin{enumerate}
\item[(i)] The loop $\af:=b_1b_2$ at $A:=\{v\}\in \bEz$  
has an exit of type (i) of  Definition~\ref{dfn-loop}(c) because
$\{\af_{[1,k]}: 1\leq k\leq 2\}=\{ b_1, b_1b_2 \}$ while 
$\CL(AE^{\leq |\af|})=\{b_1, b_1b_2, b_1 c\}$.
\vskip 1pc

\hskip 5pc \xy /r0.38pc/:(-13,0)*+{\cdots};
(-10,0)*+{\bullet}="V-1"; (0,0)*+{\bullet}="V0";
(10,0)*+{\bullet}="V1"; (20,0)*+{\bullet}="V2";(30,0)*+{\bullet}="V3";
 "V1";"V0"**\crv{(10,0)&(5,-5)&(0,0)};?>*\dir{>}\POS?(.5)*+!D{};
 "V-1";"V0"**\crv{(-10,0)&(0,0)};?>*\dir{>}\POS?(.5)*+!D{};
 "V0";"V1"**\crv{(0,0)&(10,0)};?>*\dir{>}\POS?(.5)*+!D{};
 "V1";"V2"**\crv{(10,0)&(20,0)};?>*\dir{>}\POS?(.5)*+!D{};
 "V2";"V3"**\crv{(20,0)&(30,0)};?>*\dir{>}\POS?(.5)*+!D{};
(33,0)*+{\cdots};
 (-5,1.5)*+{a};(5,1.5)*+{b_1};
 (15,1.5)*+{c}; (25,1.5)*+{d};
 (0.1,1.5)*+{v};(5,-4)*+{b_2};
\endxy

\vskip 1pc

\item[(ii)] Let $A:=\{v,w\}$. 
Since $A=r(A,b)$, $b$ is a loop at $A$ and  
has an exit of type (ii) of  Definition~\ref{dfn-loop}(c);  
 $r(A, b)_\sk=\{w\}\neq \emptyset$.

\vskip 1pc

\hskip 7pc \xy /r0.38pc/:(-13,0)*+{\cdots};
(-10,0)*+{\bullet}="V-1"; (0,0)*+{\bullet}="V0";
(10,0)*+{\bullet}="V1";  
 "V-1";"V0"**\crv{(-10,0)&(0,0)};?>*\dir{>}\POS?(.5)*+!D{};
 "V0";"V1"**\crv{(0,0)&(10,0)};?>*\dir{>}\POS?(.5)*+!D{};
 "V0";"V0"**\crv{(0,0)&(-5,-4)&(0,-8)&(5,-4)&(0,0)};?>*\dir{>}\POS?(.5)*+!D{};
 (-5,1.5)*+{a};(5,1.5)*+{b};
 (0.1,1.5)*+{v};(5,-4)*+{b};
 (10.1,1.5)*+{w}; 
\endxy

\vskip 1pc
\item[(iii)] The loop $\af:=bc$ at $A:=\{v\}\in \bEz$  
has an exit of type (iii) of  Definition~\ref{dfn-loop}(c) 
because  $A\subsetneq r(A,\af)$.

\vskip 1pc

\hskip 5pc \xy /r0.38pc/:(-13,0)*+{\cdots};
(-10,0)*+{\bullet}="V-1"; (0,0)*+{\bullet}="V0";
(10,0)*+{\bullet}="V1"; (20,0)*+{\bullet}="V2";(30,0)*+{\bullet}="V3";
 "V1";"V0"**\crv{(10,0)&(5,-5)&(0,0)};?>*\dir{>}\POS?(.5)*+!D{};
 "V-1";"V0"**\crv{(-10,0)&(0,0)};?>*\dir{>}\POS?(.5)*+!D{};
 "V0";"V1"**\crv{(0,0)&(10,0)};?>*\dir{>}\POS?(.5)*+!D{};
 "V1";"V2"**\crv{(10,0)&(20,0)};?>*\dir{>}\POS?(.5)*+!D{};
 "V2";"V3"**\crv{(20,0)&(30,0)};?>*\dir{>}\POS?(.5)*+!D{};
(33,0)*+{\cdots};
 (-5,1.5)*+{a};(5,1.5)*+{b};
 (15,1.5)*+{c}; (25,1.5)*+{d};
 (0.1,1.5)*+{v};(5,-4)*+{c};
\endxy

\end{enumerate}
\end{ex}
 
\vskip 1pc

The following proposition is an  extended version  of the fact that 
if a directed graph $E$ has a loop with an exit, its graph $C^*$-algebra 
has an infinite projection. 

\vskip 1pc 

\begin{prop}\label{prop-loop-exit}
Let $(E,\CL)$ be a labeled graph and $\af$ be a loop at $A\in \bEz$ with an exit. 
Then $p_A$  is an infinite projection in $C^*(E,\CL,\bEz)$.
\end{prop}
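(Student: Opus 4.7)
The plan is to produce a partial isometry $V$ witnessing that $p_A$ is Murray--von Neumann equivalent to a proper subprojection of itself. A preliminary reduction disposes of sinks sitting inside $A$: if $A\cap E^0_\sk\ne\emptyset$, put $A':=A\setminus A_\sk\in\bEz$; since every path labeled $\af$ out of $A$ must start at a non-sink, $r(A',\af)=r(A,\af)$, giving $A'\subsetneq A\subseteq r(A',\af)$, so $\af$ is a loop at $A'$ with an automatic type-(iii) exit. Because $p_{A'}\le p_A$, it is enough to prove $p_{A'}$ is infinite, and one may therefore assume $A$ has no sinks.

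With $A$ sinkless, set $V:=s_\af p_A$. Using $A\subseteq r(A,\af)\subseteq r(\af)$ together with $s_\af^*s_\af=p_{r(\af)}$ and $p_A s_\af=s_\af p_{r(A,\af)}$ (Definition~\ref{def-representation}), the identities $V^*V=p_A p_{r(\af)}p_A=p_A$ and $p_A\cdot(s_\af p_A s_\af^*)=s_\af p_{r(A,\af)\cap A}s_\af^*=s_\af p_A s_\af^*$ are routine, so $V^*V=p_A$ and $VV^*=s_\af p_A s_\af^*\le p_A$. The whole argument then reduces to ruling out $VV^*=p_A$; for this I shall, under each exit hypothesis, exhibit a nonzero projection $T\le p_A$ orthogonal to $VV^*$, whence $VV^*<VV^*+T\le p_A$.

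In every case $T$ has the shape $s_\mu p_D s_\mu^*$ with $\mu$ an initial segment of $\af$ (or $\af$ itself) and $D\in\bEz$ contained in $r(A,\mu)$; the containment $T\le p_A$ follows from $p_A s_\mu=s_\mu p_{r(A,\mu)}$, and $T\ne 0$ from $D\cap r(\mu)=D\ne\emptyset$. For type~(iii), take $\mu=\af$ and $D=r(A,\af)\setminus A$, so that $VV^*\cdot T$ telescopes to $s_\af p_{A\cap D}s_\af^*=0$. For type~(i), pick any $\gamma\in\CL(AE^{\leq|\af|})\setminus\{\af_{[1,k]}:1\le k\le|\af|\}$, take $\mu=\gamma$, $D=r(A,\gamma)$: because $|\gamma|\le|\af|$ and $\gamma\ne\af_{[1,|\gamma|]}$, the labeled words $\af$ and $\gamma$ first disagree at some position, and iterating $s_a^*s_b=0$ for $a\ne b$ yields $s_\af^*s_\gamma=0$, so $VV^*\cdot T=0$. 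For type~(ii), choose any $i_0$ with $C:=r(A,\af_{[1,i_0]})_\sk\ne\emptyset$ and take $\mu=\af_{[1,i_0]}$, $D=C$; splitting $s_\af^*=s_{\af_{[i_0+1,|\af|]}}^*s_{\af_{[1,i_0]}}^*$, cancelling against $s_{\af_{[1,i_0]}}$, and pushing $s_{\af_{[i_0+1,|\af|]}}^*$ past $p_C$ via $s_\beta^*p_D=p_{r(D,\beta)}s_\beta^*$ forces the product to vanish, the decisive input being that $C$ consists of sinks, so $r(C,\beta)=\emptyset$ for every labeled word $\beta$. The boundary subcase $i_0=|\af|$ collapses directly to $s_\af p_{A\cap C}s_\af^*=0$, which holds precisely because of the sinkless reduction: $A\cap C=A\cap r(A,\af)_\sk=A\cap E^0_\sk=\emptyset$.

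The main obstacle is the algebraic manipulation in type~(ii), whose boundary subcase is exactly what necessitates the preliminary sinkless reduction. The other two exit types are formal consequences of the labeled-graph relations in Definition~\ref{def-representation}, and the proof as a whole is a labeled-space analog of the classical graph-algebra fact that a loop with an exit forces its basepoint projection to be infinite.
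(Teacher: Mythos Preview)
Your proof is correct and follows essentially the same strategy as the paper: exhibit $p_A$ (equivalently $p_{r(A,\af)}$) as Murray--von Neumann equivalent, via the partial isometry $s_\af p_A$, to a proper subprojection of itself, with the residual sink case handled by passing to $A\setminus A_\sk$. The paper organizes the cases slightly differently (it invokes the expansion of $p_A$ from Remark~\ref{remark-elements}(iii) to treat types~(i) and~(ii) with $i<|\af|$ in one stroke, and performs the sink reduction only at the end for the leftover case $i=|\af|$), but the underlying mechanism is identical. One harmless slip in your write-up: in type~(i) you set $\mu=\gamma$, which is \emph{not} an initial segment of $\af$, contradicting your blanket description of $T$---but the argument only uses $D\subseteq r(A,\mu)$, which holds.
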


\begin{proof}
If $A\subsetneq r(A,\af)$, the projection $p_{r(A,\af)}$ is infinite because  
$$p_{r(A,\af)}> p_A\geq s_\af p_{r(A,\af)}s_\af^*\sim p_{r(A,\af)}.$$
If either $\CL(AE^{\leq |\af|}) \supsetneq \{\af_{[1,k]}:\, 1\leq k\leq |\af|  \}$ or 
$r(A, \af_{[1,i]})_\sk\neq \emptyset$ for some $i$, $1\leq i\lneq |\af|$,
by Remark~\ref{remark-elements}(iii) we have
$$ p_A  =\sum_{\bt\in \CL(AE^{|\af|})}s_\bt  p_{r(A,\bt)} s_\bt^*+
\sum_{|\gm|\leq |\af|-1} s_\gm p_{r(A,\gm)_\sk} s_\gm^*  
  \, \gneq s_\af p_{r(A,\af)}s_\af^*.$$  
Thus  $p_A > s_\af p_{r(A,\af)}s_\af^* \sim p_{r(A,\af)}\geq p_A$ 
and we see that the projection $p_{r(A,\af)}$ (hence $p_A$) is infinite.
Now it remains to prove the assertion in case $r(A,\af)_\sk\neq \emptyset$ 
and $A=r(A,\af)$. 
The set $A_0:=A\setminus A_\sk\,(\neq \emptyset)$ then satisfies 
$A_0\subsetneq A=r(A,\af)=r(A_0,\af)$, and by the first argument of the proof 
$p_{A_0}$ is infinite. Hence $p_A(\gneq p_{A_0})$ is infinite.
\end{proof}

\vskip 1pc

\begin{remark}\label{rmk-loops}
A generalized version of Proposition~\ref{prop-loop-exit} 
is also true: Let $(E,\CL)$ be a labeled graph and $\af_1, \dots, \af_n$ be  
distinct labeled paths  with the same length, say $l\geq 1$, 
such that $A\subseteq \cup_{i=1}^n r(A,\af_i)$.   
Then $p_A$  is an infinite projection in $C^*(E,\CL,\bEz)$ if one of the following 
holds:
\begin{enumerate} 
\item[(i)] $\cup_{i=1}^n\{\af_i': \af_i' \text{ is an initial path of } \af_i \}
\subsetneq  \CL(AE^{\leq l})$
\item[(ii)] $r(A,\af_i')_\sk\neq \emptyset$ for some $i$ and an 
initial path $\af_i'$ of $\af_i$
\item[(iii)] $A\subsetneq \cup_{i=1}^n r(A,\af_i)$.
\end{enumerate}

\noindent  
To prove this, first assume the case (iii) and   
set $A_1:=r(A,\af_1)$ and 
$A_i:=r(A,\af_i)\setminus \cup_{j=1}^{i-1}r(A,\af_j)$, $i=2,\dots, n$, so that 
$\cup_{i=1}^n r(A,\af_i)=\cup_{i=1}^n A_i$ is the union of disjoint sets $A_i$'s. 
Then we have 
$$
p_A \geq \sum_{i=1}^n s_{\af_i}p_{r(A,\af_i)}s_{\af_i}^* 
      \geq \sum_{i=1}^n s_{\af_i}p_{A_i}s_{\af_i}^* 
      \sim 
      \sum_{i=1}^n p_{A_i}=p_{\cup A_i}=p_{\cup_{i=1}^n r(A,\af_i)}
      \gneq p_A$$
and so the projection $p_A$ is infinite, 
where the equivalence is given by the partial isometry 
$u:=\sum_{i=1}^n s_{\af_i}p_{A_i}$. 
It is not hard to see that 
the same argument in the proof of Proposition~\ref{prop-loop-exit} 
shows  the assertion for the rest cases. 
\end{remark}

\vskip 1pc
 
\begin{prop}\label{prop-finite}
Let $(E,\CL,\bEz)$ be a labeled space such that 
$C^*(E,\CL,\bEz)$ has no infinite projections. 
Let  $A\in \bEz$ admit a loop.
Then there exists a loop $\af$ at $A$ such that $A=r(A,\af)$ and 
$\CL(AE^{\geq 1})
=\{ \af^k\af': k\geq 0,\ \af' \text{ is an initial path of }\af\}$. 
\end{prop}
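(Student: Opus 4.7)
The plan is to exploit the hypothesis on infinite projections via Proposition~\ref{prop-loop-exit}: every loop at $A$ must fail to admit any of the three types of exit described in Definition~\ref{dfn-loop}(c). Fix any loop $\af$ at $A$, which exists by assumption. Absence of a type-(iii) exit gives $A=r(A,\af)$, while absence of a type-(i) exit gives
\[
\CL(AE^{\leq |\af|})=\{\af_{[1,k]}:1\leq k\leq |\af|\}.
\]
Isolating length $|\af|$ yields the sharper identity $\CL(AE^{|\af|})=\{\af\}$, which will be the main driver of the proof. Additionally, absence of a type-(ii) exit forces each intermediate set $r(A,\af_{[1,i]})$ to contain no sink, so that from any vertex in $A$ one can actually extend to a full length-$|\af|$ edge path back into $A$.

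For the inclusion $\{\af^k\af':k\geq 0,\ \af'\text{ initial path of }\af\}\subseteq \CL(AE^{\geq 1})$, I iterate the extension argument above: starting from any $v\in A$, inductively build an edge path labeled $\af^k$ whose range lies in $A$, then append a length-$|\af'|$ prefix of a labeled-$\af$ path from that endpoint. For the reverse inclusion, I induct on $n$ to show every $\gamma\in\CL(AE^n)$ has the form $\af^k\af'$ with $k|\af|+|\af'|=n$. The cases $n\leq |\af|$ are immediate from the type-(i) identity. For $n>|\af|$, given $\lambda\in AE^n$ with $\CL(\lambda)=\gamma$, split $\lambda=\lambda^{(1)}\lambda^{(2)}$ at position $|\af|$; then $\CL(\lambda^{(1)})\in \CL(AE^{|\af|})=\{\af\}$ pins the first $|\af|$ letters of $\gamma$ to $\af$, while $r(\lambda^{(1)})\in r(A,\af)=A$ relocates $\lambda^{(2)}$ into $AE^{n-|\af|}$, so the inductive hypothesis applies to $\CL(\lambda^{(2)})$ and yields $\gamma=\af^k\af'$.

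The only delicate point is the factorization in the inductive step, which hinges on the two identities $\CL(AE^{|\af|})=\{\af\}$ and $A=r(A,\af)$. Both come directly from the no-exit hypothesis, so once Proposition~\ref{prop-loop-exit} has been invoked the remainder of the argument is bookkeeping. Combining the two inclusions then produces the claimed description of $\CL(AE^{\geq 1})$.
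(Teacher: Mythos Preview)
Your proof is correct and follows essentially the same approach as the paper's: invoke Proposition~\ref{prop-loop-exit} to rule out all three types of exit, deduce $A=r(A,\af)$ and $\CL(AE^{|\af|})=\{\af\}$, and then peel off copies of $\af$ from the front of any $\gamma\in\CL(AE^{\geq 1})$. The paper chooses $\af$ of minimal length among loops at $A$, but this minimality is never actually used in its argument, so your choice of an arbitrary loop is equally valid. You are also more explicit than the paper about the forward inclusion $\{\af^k\af'\}\subseteq\CL(AE^{\geq 1})$, correctly noting that the absence of type-(ii) exits (no sinks in the intermediate ranges $r(A,\af_{[1,i]})$, including $r(A,\af)=A$ itself) is what guarantees one can keep extending; the paper leaves this direction implicit.
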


\begin{proof} 
Choose a loop $\af$ at $A$  with the smallest length; 
$|\af|\leq |\gm|$ for all loops  $\gm$ at $A$.  
Since $C^*(E,\CL,\bEz)$ has no infinite projections, 
$\af$ does not have an exit by Proposition~\ref{prop-loop-exit}, hence $A=r(A,\af)$ and 
\begin{eqnarray}\label{eqn-loops}
\CL(AE^{\leq |\af|})= \{\af_{[1,k]}:\, 1\leq k\leq |\af| \}. 
\end{eqnarray}  
Now let $\bt\in \CL(AE^{\geq 1})$ be a path with $|\bt|> |\af|$. 
Then by (\ref{eqn-loops}), $\CL(AE^{|\af|})=\{\af\}$ and so 
we can write $\bt =\af\bt'$ for a path $\bt'$.
But then from $A=r(A,\af)$, $\bt'$ must be either an initial path of $\af$ 
or of the form $\af\bt''$ for some path $\bt''$. 
Applying the argument repeatedly, we  finally end up with 
$\bt=\af^k \af'$ for some $k\geq 1$ and an initial path $\af'$ of $\af$. 
\end{proof}

\vskip 1pc

\section{AF labeled graph $C^*$-algebras}

\vskip 1pc 

\begin{remark}\label{rmk-AF-eq} We will consider the following 
properties (a)-(d) of a labeled space $(E,\CL,\bEz)$ 
and its $C^*$-algebra $C^*(E,\CL,\bEz)$.  
These properties are equivalent if $\CL$ is 
the trivial labeling $\CL_{id}$ 
as we have seen in  Proposition~\ref{prop-AF graph algebra}. 
\begin{enumerate}
\item[(a)] For every finite set  $\{A_1, \dots, A_N\}$ of $\bEz$ and every $K\geq 1$, 
there exists an $m_0\geq 1$ such that 
$A_{i_1} E^{\leq K} A_{i_2} \cdots E^{\leq K} A_{i_n} =\emptyset$  
for all $n> m_0$ and $A_{i_j}\in \{A_1,\dots,A_N\}$.  
\item[(b)] $(E,\CL,\bEz)$ has no repeatable paths. 
\item[(c)] $C^*(E,\CL,\bEz)$ is an AF algebra.
\item[(d)] $(E,\CL,\bEz)$ has no loops (in the sense of Definition~\ref{dfn-loop}).
\end{enumerate} 

\noindent 
Note that (a) $\Rightarrow$ (b) 
follows from a simple  observation that 
if $\af$ is a repeatable path,
then with $A:=r(\af)$ one has
$A_{i_1}E^{|\af|}A_{i_2}\cdots E^{|\af|}A_{i_n}\neq \emptyset$ for all $n\geq 1$, 
where $A_{i_j}=A$, $j=1,\dots, n$.  
The implication (b) $\Rightarrow$ (d) is obvious.  
 
For the other implications, we shall see 
(b) $\nRightarrow$ (a) and (b) $\nRightarrow$ (c), in general   
throughout Example~\ref{ex-Morse}. 
Consequently  (d) $\nRightarrow$ (c) follows although 
it can  also be seen from Example~\ref{ex-AF-noloop}(iii).
We will show that (c) $\Rightarrow$ (d) and  (a) $\Rightarrow$ (c) 
hold true in Theorem~\ref{thm-loopAF} and Theorem~\ref{thm-AF}, respectively.    
  
It would be interesting to know whether 
the remaining implication (c) $\Rightarrow$ (b) is true, that is, 
whether $C^*(E,\CL,\bEz)$ will never be AF 
whenever $(E,\CL,\bEz)$ contains a repeatable path. 
In Theorem~\ref{thm-repeatablepath}, we obtain 
a partial affirmative answer.
\end{remark}

\vskip 1pc
  
\begin{thm}\label{thm-loopAF} Let $(E,\CL)$ be a labeled graph. 
If $C^*(E,\CL, \bEz)$ is an AF algebra, the labeled space $(E,\CL,\bEz)$ has no loops.
\end{thm}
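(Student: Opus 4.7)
The plan is to argue by contradiction: assume $C^*(E,\CL,\bEz)$ is AF while $(E,\CL,\bEz)$ admits a loop. Since AF algebras are stably finite and therefore contain no infinite projections, Proposition~\ref{prop-loop-exit} rules out loops with exits. By Proposition~\ref{prop-finite}, we may then fix a loop $\af$ at some $A\in\bEz$ with $A=r(A,\af)$ and $\CL(AE^{\geq 1})=\{\af^k\af':k\geq 0,\ \af'\text{ an initial path of }\af\}$. Note that the absence of an exit of type (ii) at $i=|\af|$ forces $A_\sk=r(A,\af)_\sk=\emptyset$.

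The next step is to exhibit a unitary in a corner. Expanding $p_A$ via the iterated Cuntz-Krieger relation in Remark~\ref{remark-elements}(iii) at level $n=|\af|$: the structure yields $\CL(AE^{|\af|})=\{\af\}$ and $r(A,\af)=A$, while every sink-contribution vanishes by no-exit. The expansion therefore collapses to $p_A=s_\af p_A s_\af^*$. Combined with $(s_\af p_A)^*(s_\af p_A)=p_A p_{r(\af)} p_A=p_A$ (since $A\subset r(\af)$), this shows $u:=s_\af p_A$ is a unitary in the hereditary subalgebra $B:=p_A C^*(E,\CL,\bEz)p_A$, which is itself AF. Because the gauge action fixes $p_A$, it restricts to $B$ with $\gamma_z(u)=z^{|\af|}u$, and the spectrum-preserving identity $\mathrm{sp}(u)=z^{|\af|}\mathrm{sp}(u)$ for all $z\in\T$ forces the nonempty subset $\mathrm{sp}(u)\subset\T$ to be rotation-invariant. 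Hence $\mathrm{sp}(u)=\T$, and the unital $C^*$-subalgebra $C^*(u)\subset B$ is isomorphic to $C(\T)$.

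To complete the contradiction one must show that $B$ itself fails to be AF. The cleanest route is to reduce to the case where $A$ is minimal in $\bEz$ among loop-admitting sets, passing via the decomposition of $\bEz$ in Proposition~\ref{prop-barE} to an ``atomic'' subset of $A$ that inherits the loop $\af$ under the same relative-range calculations. Under such minimality, the description in Remark~\ref{remark-elements}(i), combined with the restricted label structure $\CL(AE^{\geq 1})=\{\af^k\af'\}$, forces every generator $p_A(s_\zt p_C s_\et^*)p_A$ of the corner to collapse into $C^*(u,p_A)=C^*(u)$. Thus $B=C^*(u)\cong C(\T)$, which is not AF (its only projections are $0$ and $p_A$), contradicting that hereditary subalgebras of AF algebras are AF.

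The main obstacle is precisely this minimal-descent step: $\bEz$ need not possess a minimal loop-admitting element in an obvious way, so one must carefully exploit the generalized-vertex decomposition of Proposition~\ref{prop-barE} to produce one while tracking which atomic pieces $[v]_l$ retain the loop structure of Proposition~\ref{prop-finite}. A parallel route that sidesteps minimality is to construct a gauge-invariant ideal $J\triangleleft B$ with $B/J\cong C(\T)$ and invoke that quotients of AF algebras remain AF; the technical heart there is to identify which elements $s_{\af^k\af'}p_D s_{\af^l\af''}^*$ coming from proper initial paths of $\af$ must lie in $J$.
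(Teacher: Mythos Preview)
Your setup through the construction of the unitary $u=s_\af p_A$ in the corner $B:=p_A\,C^*(E,\CL,\bEz)\,p_A$ and the gauge relation $\gamma_z(u)=z^{|\af|}u$ is essentially the paper's argument (the paper deduces $uu^*=p_A$ from finiteness of $p_A$ rather than from the explicit Cuntz--Krieger expansion, and it does not invoke Proposition~\ref{prop-finite} at all, but these differences are cosmetic).

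The gap is in the last step. You correctly recognise that $C^*(u)\cong C(\T)$ sitting inside $B$ does \emph{not} by itself contradict $B$ being AF --- indeed $C(\T)$ embeds unitally in any infinite-dimensional UHF algebra --- and your two proposed remedies are left unfinished. Both are in fact hard to carry out: $\bEz$ need not contain a minimal loop-admitting element, so the descent has no obvious terminus, and the quotient route requires identifying a gauge-invariant ideal of the corner with the correct quotient, which is substantial work.

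The paper's completion is a one-liner using an ingredient you already have in hand. The scaling relation $\gamma_z(u)=z^{|\af|}u$ itself, via \cite[Proposition~3.9]{ES}, forces $u$ to lie outside the path-connected component of the unit $p_A$ in the unitary group of $B$. Since every unitary in a unital AF algebra is connected to the identity, this is the contradiction. In other words, you extracted from $\gamma_z(u)=z^{|\af|}u$ only the spectral consequence $\mathrm{sp}(u)=\T$, whereas the paper extracts from it the stronger $K_1$-type obstruction that finishes the proof immediately.
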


\begin{proof} 
Suppose, for contradiction, that $(E,\CL,\bEz)$ has a loop $\af$ at $A\in \bEz$. 
 By Proposition~\ref{prop-loop-exit}, $A= r(A,\af)$  and so  
$p_A s_\af=s_\af p_{r(A,\af)}=s_\af p_A$. 
Then $U:=s_\af p_A$ satisfies  
$$p_A=U^*U\sim UU^*=s_\af p_A s_\af^*=s_\af p_{r(A,\af)} s_\af^*\leq p_A.$$ 
Since $p_A$ is a finite projection, it follows that  
$U$ is a unitary of the unital hereditary subalgebra $p_A C^*(E,\CL,\bEz)p_A$. 
Since $\gm_z(p_A)=p_A$ for any $z\in \mathbb T$, 
the algebra $p_A C^*(E,\CL,\bEz)p_A$ admits an action of $\mathbb T$ which is 
the restriction of the gauge action  $\gm$ on $C^*(E,\CL,\bEz)$.
Then the fact that $\gm_z(U)=\gm_z(s_\af)p_A=z^{|\af|}U$ shows that 
$U$ is not in the unitary path connected
component of the unit $p_A$ (\cite[Proposition 3.9]{ES}), which is a contradiction to 
the assumption that $C^*(E,\CL,\bEz)$ (hence any nonzero hereditary subalgebra) is 
an AF algebra.
\end{proof}

\vskip 1pc

In Example~\ref{ex-AF-noloop}(iii) below, 
we see that the converse of Theorem~\ref{thm-loopAF} 
may not be true, in general.

\vskip 1pc

\begin{ex}\label{ex-AF-noloop} 
(i) For the following labeled graph $(E,\CL)$
\vskip 1pc

\hskip 5pc \xy /r0.38pc/:(-25,0)*+{\cdots};(25,0)*+{\cdots ,};
 (-20,0)*+{\bullet}="V-2";
(-10,0)*+{\bullet}="V-1"; (0,0)*+{\bullet}="V0";
(10,0)*+{\bullet}="V1"; (20,0)*+{\bullet}="V2";
 "V-2";"V-1"**\crv{(-20,0)&(-10,0)};
 ?>*\dir{>}\POS?(.5)*+!D{};
 "V-1";"V0"**\crv{(-10,0)&(0,0)};
 ?>*\dir{>}\POS?(.5)*+!D{};
 "V0";"V1"**\crv{(0,0)&(10,0)};
 ?>*\dir{>}\POS?(.5)*+!D{};
 "V1";"V2"**\crv{(10,0)&(20,0)};
 ?>*\dir{>}\POS?(.5)*+!D{};
 (-15,1.5)*+{a};(-5,1.5)*+{a};(5,1.5)*+{a};
 (15,1.5)*+{a};
 (0.1,-2.5)*+{v_0};(10.1,-2.5)*+{v_1};
 (-9.9,-2.5)*+{v_{-1}};
 (-19.9,-2.5)*+{v_{-2}}; 
 (20.1,-2.5)*+{v_{2}}; 
\endxy

\vskip 1pc

\noindent
we have $\bEz=\{r(a)\}=\{E^0\}$ and the path $a$ is a loop at $r(a)$.
By Theorem~\ref{thm-loopAF},  
$C^*(E,\CL,\bEz):=C^*(s_a, p_A)$ is not AF. 
Actually
$C^*(E,\CL,\bEz)\cong C(\mathbb{T})$ is the universal $C^*$-algebra 
generated by the unitary $s_a$. 

\vskip 1pc 
\noindent  
(ii) $\bEz$ of the  following labeled graph consists of 
three sets $r(a)=E^0$, $r(a)_\sk=\{v_0\}$, and 
 $A:=r(a)\setminus r(a)_\sk=\{v_{-1}, v_{-2}, \dots\}$. 
\vskip 1pc

\hskip 5pc \xy /r0.38pc/:(-15,0)*+{\cdots};
(-10,0)*+{\bullet}="V-1"; (0,0)*+{\bullet}="V0";
(10,0)*+{\bullet}="V1"; (20,0)*+{\bullet}="V2";
(30,0)*+{\bullet}="V3";
 "V-1";"V0"**\crv{(-10,0)&(0,0)};
 ?>*\dir{>}\POS?(.5)*+!D{};
 "V0";"V1"**\crv{(0,0)&(10,0)};
 ?>*\dir{>}\POS?(.5)*+!D{};
 "V1";"V2"**\crv{(10,0)&(20,0)};
 ?>*\dir{>}\POS?(.5)*+!D{};
 "V2";"V3"**\crv{(20,0)&(30,0)};
 ?>*\dir{>}\POS?(.5)*+!D{};
 (-5,1.5)*+{a};(5,1.5)*+{a};
 (15,1.5)*+{a};(25,1.5)*+{a};
 (0.1,-2.5)*+{v_{-3}};(10.1,-2.5)*+{v_{-2}};
 (-9.9,-2.5)*+{v_{-4}};
 (20.1,-2.5)*+{v_{-1}};(30.1,-2.5)*+{v_{0}};
\endxy

\vskip 1pc
\noindent  
 Since $A\subsetneq r(A,a)$, 
 the loop $a$ at $A$ has an exit  and 
 $C^*(E,\CL,\bEz)$ contains an infinite projection 
 by Proposition~\ref{prop-loop-exit}. 

\vskip 1pc
\noindent  
(iii) If $(E,\CL)$ is as follows 
\vskip 1pc

\hskip 7pc \xy /r0.38pc/:(15,0)*+{\cdots\, ,};
(-30,0)*+{\bullet}="V-3"; (-20,0)*+{\bullet}="V-2";
(-10,0)*+{\bullet}="V-1"; (0,0)*+{\bullet}="V0";
(10,0)*+{\bullet}="V1"; 
 "V-3";"V-2"**\crv{(-30,0)&(-20,0)};
 ?>*\dir{>}\POS?(.5)*+!D{};
 "V-2";"V-1"**\crv{(-20,0)&(-10,0)};
 ?>*\dir{>}\POS?(.5)*+!D{};
 "V-1";"V0"**\crv{(-10,0)&(0,0)};
 ?>*\dir{>}\POS?(.5)*+!D{};
 "V0";"V1"**\crv{(0,0)&(10,0)};
 ?>*\dir{>}\POS?(.5)*+!D{};
 (-25,1.5)*+{a};(-15,1.5)*+{a};(-5,1.5)*+{a};(5,1.5)*+{a};
 (0.1,-2.5)*+{v_{3}};(10.1,-2.5)*+{v_{4}};
 (-9.9,-2.5)*+{v_{2}};
 (-19.9,-2.5)*+{v_{1}};(-29.9,-2.5)*+{v_{0}};
\endxy

\vskip 1pc

\noindent
 it is not hard to see that $\bEz$ consists of all finite sets $F$ 
 with $v_0\notin F$ and all sets of the form $F\cup  \{v_k,v_{k+1}, \dots\}$ for some $k\geq 1$. 
 It is also easy to see that every $A\in \bEz$ containing at least two vertices
 always admits a generalized loop. 
 But there does not exist a loop at  any $A\in \bEz$.  
 Nevertheless we shall see that $C^*(E,\CL,\bEz)$ contains 
 an infinite projection and so the $C^*$-algebra is not AF.
 Let $C^*(E,\CL,\bEz)=C^*(p_A,s_a)$. 
  Then for $B:=r(a)$, we have $r(B,a)\subsetneq B$, 
 and a similar argument as in (ii) shows that the projection $p_B$ is infinite; 
 $p_{r(a)}=s_a p_{r(r(a),a)}s_a^*=s_a p_{r(a^2)}s_a^*\sim p_{r(a^2)}<p_{r(a)}$.
 
 The $C^*$-algebra $C^*(p_A,s_a)$ is unital with the unit $s_a s_a^*$; 
 $(s_a s_a^*)p_A=s_a p_{r(A,a)} s_a^*=p_A$ for all $A\in \bEz$ 
 and $(s_a s_a^*)s_a=s_a= s_a p_{r(a)}=s_a p_{r(a)}(s_a s_a^*) =s_a(s_a s_a^*)$. 
 Also we have  $s_a s_a^*\gneq p_{r(a)}= s_a^* s_a$  since 
 $s_a s_a^*\geq s_a p_{\{v_1\}} s_a^*(\neq 0)$  
 and $(s_a p_{\{v_1\}} s_a^*)p_A= s_a p_{\{v_1\}} p_{r(A,a)}s_a^*
 =s_a p_{\{v_1\}\cap r(A,a)}s_a^*=0$ 
 because $\{v_1\}\cap r(A,a)=\emptyset$ for all $A\in \bEz$. 
 Moreover every projection $p_A$ belongs to the $*$-algebra generated by $s_a$. 
 Therefore $C^*(E,\CL,\bEz)$ is the universal $C^*$-algebra 
 generated by a proper coisometry $s_a$, and thus $C^*(E,\CL,\bEz)$ is the Toeplitz algebra.  
 The ideal $I_{\{v_1\}}$ generated by the projection 
 $p_{\{v_1\}}$ is in fact isomorphic to the the $C^*$-algebra of 
 compact operators on an infinite dimensional separable Hilbert space  
 as $I_{\{v_1\}}=\overline{\rm span}\{ s_a^m p_{\{v_i\}} (s_a^*)^n 
 :\, m,n\geq 0\text { and } i\geq 1\}$ (see (\ref{eqn-ideal-elements})).
 The quotient algebra $C^*(E,\CL,\bEz)/I_{\{v_1\}}$ is isomorphic to $C(\mathbb T)$.
\end{ex}
 
\vskip 1pc

For a labeled graph $(E,\CL_E)$, $v\sim w$ if and only if $v\sim_l w$ for all 
$l\geq 1$ defines an equivalence relation on $E^0$. 
We denote the equivalence class of $v\in E^0$ by $[v]_\infty$.
If $(E,\CL_E)$ has no sinks or sources, 
there exists a labeled graph $(F,\CL_F)$  called the {\it merged labeled graph} 
of $(E,\CL_E)$ with vertices $F^0:=\{[v]_\infty:\, v\in E^0\}$ 
and edges $F^1:=\{e_\ld:\, \ld\in E^1\}$, where  
$e_\ld$ is a path with $s_F(e_\ld)=[s(\ld)]_\infty$, $r_F(e_\ld)=[r(\ld)]_\infty$, 
and  $\CL_F(e_\ld)=\CL_E(\ld)$. 
The range of $a\in \CL_F(F^1)$ is defined by 
$r_F(a)=\{r_F(e_\ld): \CL_F(e_\ld)=a\}$. 
Here we use notation $r_F$ to denote both the range map of paths in $F^*$ and 
of labeled paths in $\CL_F^*(F)$. 
 It is known in \cite[Theorem 6.10]{JKP} 
 that if $[v]_\infty\in \bEz$ for all $v\in  E^0$, 
 then $\{ [v]_\infty\}\in\overline{\CF^0}$ for all $[v]_\infty\in F^0$ and 
 moreover $C^*(E,\CL_E,\bEz)\cong C^*(F,\CL_F,\overline{\CF^0})$. 
  Even when $(E,\CL_E)$ has  sinks or sources, we can obtain 
 $C^*(E,\CL_E,\bEz)\cong C^*(F,\CL_F,\overline{\CF^0})$ 
 whenever $[v]_\infty\in \bEz$ for all $v\in  E^0$ 
 without significant modification of the proof of \cite[Theorem 6.10]{JKP}.

The following proposition is a slightly generalized version of 
the result well known for graph $C^*$-algebras. 
Actually in case $\CL$ is the trivial labeling, 
$C^*(E,\CL,\bEz)=C^*(E)$ and the minimal sets in $\bEz$ are 
the single vertex sets $\{v\}$, $v\in E^0$. 

\vskip 1pc 

\begin{prop}\label{prop F-isom} Let $(E,\CL)$ be a row-finite labeled graph 
with no sinks or sources such that 
 every generalized vertex  is a finite union of minimal sets in $\bEz$.  
 Then $C^*(E,\CL,\bEz)$ is AF if and only if 
 no minimal set of $\bEz$ admits a loop. 
\end{prop}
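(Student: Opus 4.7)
The forward implication is immediate from Theorem~\ref{thm-loopAF}: if $C^*(E,\CL,\bEz)$ is AF, then $(E,\CL,\bEz)$ contains no loops at all, so in particular no minimal set of $\bEz$ admits one.

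For the converse, the plan is to transport the statement to the merged labeled graph $(F,\CL_F)$ of $(E,\CL)$ recalled in the paragraph preceding the proposition. The hypothesis that every generalized vertex $[v]_l$ is a finite union of minimal sets of $\bEz$ is rigid enough that the decreasing chain $[v]_1\supseteq[v]_2\supseteq\cdots$ must stabilize after finitely many steps; hence $[v]_\infty=[v]_N$ for some $N$, and in particular $[v]_\infty\in\bEz$ for every $v\in E^0$. The extension of \cite[Theorem 6.10]{JKP} quoted there then supplies a $*$-isomorphism
$$
C^*(E,\CL_E,\bEz)\cong C^*(F,\CL_F,\bFz),
$$
with each singleton $\{[v]_\infty\}$ lying in $\bFz$ and being a minimal element there.

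Under this identification, the minimal sets of $\bEz$ are precisely the classes $[v]_\infty$, and they correspond bijectively to the minimal (singleton) sets of $\bFz$. The hypothesis that no minimal set of $\bEz$ admits a loop thus becomes: no singleton $\{[v]_\infty\}\in\bFz$ admits a loop in $(F,\CL_F,\bFz)$. Because singletons are minimal, any generalized loop at one is already a loop, and by tracing labels back to edges of $F$ such a loop is nothing other than a directed-graph loop at $[v]_\infty$ in $F$. Hence the hypothesis says that $F$ contains no directed loops. Next I invoke Theorem~\ref{thm-AF} applied to $(F,\CL_F,\bFz)$. By Proposition~\ref{prop-barE} (whose hypotheses transfer to $F$ because its generalized vertices are themselves finite unions of singletons), every $B\in\bFz$ is a finite subset of $F^0$. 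Since $F$ is row-finite, has no directed loops, and has all sets in $\bFz$ finite, the pigeonhole argument from the proof of Proposition~\ref{prop-AF graph algebra} (the implication (ii)\,$\Rightarrow$\,(v)) yields, for every finite $\{B_1,\dots,B_N\}\subseteq\bFz$ and every $K\geq 1$, an $m_0$ with
$$
B_{i_1}F^{\leq K}B_{i_2}\cdots F^{\leq K}B_{i_n}=\emptyset
$$
for all $n>m_0$. Theorem~\ref{thm-AF} then gives that $C^*(F,\CL_F,\bFz)$, and hence $C^*(E,\CL,\bEz)$, is AF.

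The principal obstacle is the bookkeeping of the second and third paragraphs: verifying cleanly that the minimal sets of $\bEz$ are precisely the classes $[v]_\infty$, and that a loop at such a minimal set in $(E,\CL,\bEz)$ corresponds exactly to a directed-graph loop in $F$ (and not merely to a labeled loop carrying an ``exit'' in the sense of Definition~\ref{dfn-loop}(c)). Once this correspondence through the merging functor of \cite{JKP} is pinned down, the remaining step is the direct pigeonhole bound that appeared already in Proposition~\ref{prop-AF graph algebra} combined with the sufficient condition supplied by Theorem~\ref{thm-AF}.
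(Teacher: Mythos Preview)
Your proposal is correct and follows the same overall architecture as the paper: pass to the merged labeled graph $(F,\CL_F)$ via \cite[Theorem~6.10]{JKP}, translate ``no loop at a minimal set of $\bEz$'' into ``$F$ has no directed loops,'' and conclude that the associated $C^*$-algebra is AF. Your stabilization argument for $[v]_\infty\in\bEz$ is a clean way to justify what the paper simply asserts.

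The one genuine divergence is in the final step. The paper does \emph{not} invoke Theorem~\ref{thm-AF}. Instead, it observes that each $r_F(a)$ is finite (because $r(a)$ is a finite union of minimal sets $[w_i]_{l_i}=[w_i]_\infty$), so the left-resolving merged graph $(F,\CL_F)$ is label-finite, and then applies \cite[Theorem~6.6]{BP1} to obtain $C^*(F,\CL_F,\bFz)\cong C^*(F)$. Once one is in the ordinary graph-algebra setting, the classical ``no loops $\Rightarrow$ AF'' result of \cite{KPR} finishes the argument. Your route instead stays inside the labeled framework: you argue that every $B\in\bFz$ is finite (which amounts to the same finiteness of $r_F(a)$ that the paper proves), run the pigeonhole count from Proposition~\ref{prop-AF graph algebra} to verify condition~(a), and then apply Theorem~\ref{thm-AF}. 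Both routes work; the paper's is shorter because it outsources the AF conclusion to existing graph-algebra theory, while yours is more self-contained in that it uses only results proved in this paper together with \cite{JKP}. Note that your aside about $F$ being row-finite is unnecessary for the pigeonhole step and is not something the paper establishes; the argument needs only that the sets $B_i$ are finite and that $F$ has no directed loops.
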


\begin{proof} Let $(F,\CL_F)$ be the merged labeled graph of $(E,\CL)$. 
We first show that  $C^*(E,\CL,\bEz)$ is isomorphic to the graph $C^*$-algebra $C^*(F)$. 

Our assumption implies $[v]_\infty\in \bEz$ for all $v\in E^0$, so 
 $\{[v]_\infty\}\in \bFz$ for all $v\in E^0$  and 
 $C^*(E,\CL,\bEz)$ is isomorphic to $C^*(F, \CL_F, \bFz)$ 
(\cite[Theorem 6.10]{JKP}).  
For each $a \in \CL(E^1)$, its range $r(a)$ can be written 
as the union $r(a) = \cup_{i=1}^{n} [w_i]_{l_i}$ of  
finitely many minimal sets $[w_i]_{l_i}$ by the assumption, 
but minimality of each  $[w_i]_{l_i}$ implies that 
$[w_i]_{l_i}=[w_i]_\infty$ for $1\leq i\leq n$.   
Hence $r_F(a)=[{r(a)}]_\infty := \{[w]_\infty: w\in r(a)\} 
=\{[w_1]_\infty, \dots ,[w_n]_\infty\}$ is finite for each $a\in \CA$. 
But from the construction (\cite[Definition 6.1]{JKP}), 
the merged labeled graph $(F,\CL_F)$ is left-resolving. 
Thus the finiteness of each range set $r_F(a)$ implies that 
$(F, \CL_F)$ is label-finite. 
Then by  \cite[Theorem 6.6]{BP1}, we have $C^*(F, \CL_F, \bFz) \cong C^*(F)$. 

Suppose that there is no loop at any minimal set 
$[v]_\infty$ in $\bEz$. 
Since $\CL_E([v]_\infty E^k v')=\CL_F([v]_\infty F^k [v]_\infty)$ 
for all $v'\in [v]_\infty$ and $k\geq 1$ (\cite[Lemma 6.7]{JKP}), 
if $F$ has a loop $\af$ at a vertex $[v]_\infty\in F^0$, 
$\af\in \CL_E([v]_\infty E^k v')$ for all $v'\in [v]_\infty$. 
This means that $[v]_\infty(\in \bEz)$ satisfies 
$[v]_\infty\subset r([v]_\infty, \af)$, a contradiction. 
Hence $F$ has no loops and the $C^*$-algebra  $C^*(F)$ is AF. 
The converse was proved in Theorem~\ref{thm-loopAF}.
\end{proof}

\vskip 1pc

\begin{ex}\label{ex-noinfinitepath} In the following labeled graph $(E,\CL)$
\vskip .5pc

\hskip 1pc \xy /r0.38pc/:(-33,0)*+{\cdots};
(-30,0)*+{\bullet}="V-3"; (-20,0)*+{\bullet}="V-2";
(-10,0)*+{\bullet}="V-1"; (0,0)*+{\bullet}="V0";
(10,0)*+{\bullet}="V1"; (20,0)*+{\bullet}="V2";
(30,0)*+{\bullet}="V3";
(-33,8)*+{\cdots};
(-30,8)*+{\bullet}="V-31"; (-20,8)*+{\bullet}="V-21";
(-10,8)*+{\bullet}="V-11"; (0,8)*+{\bullet}="V01";
(10,8)*+{\bullet}="V11"; (20,8)*+{\bullet}="V21";
(30,8)*+{\bullet}="V31";
(33,8)*+{\cdots};
(-33,16)*+{\cdots};
(-30,16)*+{\bullet}="V-32"; (-20,16)*+{\bullet}="V-22";
(-10,16)*+{\bullet}="V-12"; (0,16)*+{\bullet}="V02";
(10,16)*+{\bullet}="V12"; (20,16)*+{\bullet}="V22";
(30,16)*+{\bullet}="V32";
(33,16)*+{\cdots};
(0,19)*+{\vdots};
(-10,19)*+{\vdots};(10,19)*+{\vdots};
(-20,19)*+{\vdots};(20,19)*+{\vdots};
(-30,19)*+{\vdots};(30,19)*+{\vdots};
 "V-32";"V-31"**\crv{(-30,16)&(-30,8)};
 ?>*\dir{>}\POS?(.5)*+!D{}; "V-31";"V-3"**\crv{(-30,8)&(-30,0)};
 ?>*\dir{>}\POS?(.5)*+!D{};
 "V-22";"V-21"**\crv{(-20,16)&(-20,8)};
 ?>*\dir{>}\POS?(.5)*+!D{}; "V-21";"V-2"**\crv{(-20,8)&(-20,0)};
 ?>*\dir{>}\POS?(.5)*+!D{};
  "V-12";"V-11"**\crv{(-10,16)&(-10,8)};
 ?>*\dir{>}\POS?(.5)*+!D{}; "V-11";"V-1"**\crv{(-10,8)&(-10,0)};
 ?>*\dir{>}\POS?(.5)*+!D{};
 "V02";"V01"**\crv{(0,16)&(0,8)};
 ?>*\dir{>}\POS?(.5)*+!D{}; "V01";"V0"**\crv{(0,8)&(0,0)};
 ?>*\dir{>}\POS?(.5)*+!D{};
  "V12";"V11"**\crv{(10,16)&(10,8)};
 ?>*\dir{>}\POS?(.5)*+!D{}; "V11";"V1"**\crv{(10,8)&(10,0)};
 ?>*\dir{>}\POS?(.5)*+!D{};
  "V22";"V21"**\crv{(20,16)&(20,8)};
 ?>*\dir{>}\POS?(.5)*+!D{}; "V21";"V2"**\crv{(20,8)&(20,0)};
 ?>*\dir{>}\POS?(.5)*+!D{};
  "V32";"V31"**\crv{(30,16)&(30,8)};
 ?>*\dir{>}\POS?(.5)*+!D{}; "V31";"V3"**\crv{(30,8)&(30,0)};
 ?>*\dir{>}\POS?(.5)*+!D{};
  "V-3";"V-2"**\crv{(-30,0)&(-20,0)};
 ?>*\dir{>}\POS?(.5)*+!D{};
  "V-2";"V-1"**\crv{(-20,0)&(-10,0)};
 ?>*\dir{>}\POS?(.5)*+!D{};
 "V-1";"V0"**\crv{(-10,0)&(0,0)};
 ?>*\dir{>}\POS?(.5)*+!D{};
 "V0";"V1"**\crv{(0,0)&(10,0)};
 ?>*\dir{>}\POS?(.5)*+!D{};
 "V1";"V2"**\crv{(10,0)&(20,0)};
 ?>*\dir{>}\POS?(.5)*+!D{};
 "V2";"V3"**\crv{(20,0)&(30,0)};
 ?>*\dir{>}\POS?(.5)*+!D{};
(33,0)*+{\cdots ,};
 (-25,1)*+{a};(-15,1)*+{a};(-5,1)*+{a};(5,1)*+{a};
 (15,1)*+{a};(25,1)*+{a};
 (0.1,-2.5)*+{v_0};(10.1,-2.5)*+{v_1};
 (2,8)*+{u_0};(12,8)*+{u_1};
 (22,8)*+{u_2};(-7,8)*+{u_{-1}};(-17,8)*+{u_{-2}};(-27,8)*+{u_{-3}};
 (-9.9,-2.5)*+{v_{-1}};
 (-19.9,-2.5)*+{v_{-2}};(-29.9,-2.5)*+{v_{-3}};
 (20.1,-2.5)*+{v_{2}};(30.1,-2.5)*+{v_{3}};
 (-1.1,4)*+{b};(-1.1,12)*+{b};
 (-21.1,4)*+{b};(-21.1,12)*+{b};
 (18.9,4)*+{b};(18.9,12)*+{b};
 (-11.1,4)*+{c};(-11.1,12)*+{c};
 (8.9,4)*+{c};(8.9,12)*+{c};
 (28.9,4)*+{c};(28.9,12)*+{c};
\endxy

\vskip 1pc
\noindent the path $\af:=a^2$ is a loop at 
$\{v_{2k}:\, k\in \mathbb Z\}$ and also at $\{v_{2k+1}:\, k\in \mathbb Z\}$. 
By Theorem~\ref{thm-loopAF}, the $C^*$-algebra $C^*(E,\CL,\bEz)$ is not AF. 
In fact, $C^*(E,\CL,\bEz)$ is isomorphic to the graph algebra 
$C^*(F)$, where $F$ is the underlying graph of the merged labeled graph 
$(F,\CL_F)$ of $(E,\CL)$ by Proposition~\ref{prop F-isom};  
\vskip 1pc
\hskip 3pc
\xy
/r0.38pc/:
(0,0)*+{\bullet}="u";
 (10,0)*+{\bullet}="V0";
  (20,0)*+{\bullet}="V1";
   (30,0)*+{\bullet}="w";
 "u";"u"**\crv{(0,0)&(-4.5,5)&(-9,0)&(-4.5,-5)&(0,0)};?>*\dir{>}\POS?(.5)*+!D{};
 "w";"w"**\crv{(31,0)&(35.5,5)&(40,0)&(35.5,-5)&(31,0)};?>*\dir{>}\POS?(.5)*+!D{};
 "u";"V0"**\crv{(0,0)&(9,0)};?>*\dir{>}\POS?(.5)*+!D{};
 "w";"V1"**\crv{(29,0)&(21,0)};?>*\dir{>}\POS?(.5)*+!D{};
  "V0";"V1"**\crv{(10.5,0)&(15,5.5)&(20,0)};?>*\dir{>}\POS?(.5)*+!D{};
   "V1";"V0"**\crv{(20,0)&(15,-5.5)&(10.5,0)};?>*\dir{>}\POS?(.5)*+!D{};
 (-9,0)*+{b},(40,0)*+{c\,}, (15,4)*+{a},(15,-3.7)*+{a},(4,-1)*+{b},(27,-1)*+{c},
 (1.9,2)*+{[u_0]_\infty},(9,-2.3)*+{[v_0]_\infty},
 (22,-2.3)*+{[v_1]_\infty},(29,2)*+{[u_1]_\infty},
 (-15,0)*+{F:},
\endxy
\end{ex}
 
\vskip 1pc

\begin{ex}\label{ex-thm} The following labeled graph $(E,\CL)$ 
does not have any infinite paths, but it has a repeatable path $a$. 

\vskip .5pc

\hskip 7pc \xy /r0.35pc/: 
(-30,0)*+{\bullet}="U1"; (-20,0)*+{\bullet}="V11";
(-30,-8)*+{\bullet}="U2"; (-20,-8)*+{\bullet}="V21";
(-10,-8)*+{\bullet}="V22";
(-30,-16)*+{\bullet}="U3"; (-20,-16)*+{\bullet}="V31";
(-10,-16)*+{\bullet}="V32"; (0,-16)*+{\bullet}="V33";
(-30,-22)*+{\vdots}; 
(-20,-22)*+{\vdots};
(-10,-22)*+{\vdots};
(0,-22)*+{\vdots};
(7,-22)*+{\ddots};
 "U1";"V11"**\crv{(-30,0)&(-20,0)};?>*\dir{>}\POS?(.5)*+!D{}; 
 "U2";"V21"**\crv{(-30,-8)&(-20,-8)}; ?>*\dir{>}\POS?(.5)*+!D{};
 "V21";"V22"**\crv{(-20,-8)&(-10,-8)};?>*\dir{>}\POS?(.5)*+!D{}; 
 "U3";"V31"**\crv{(-30,-16)&(-20,-16)};?>*\dir{>}\POS?(.5)*+!D{};
 "V31";"V32"**\crv{(-20,-16)&(-10,-16)};?>*\dir{>}\POS?(.5)*+!D{}; 
 "V32";"V33"**\crv{(-10,-16)&(0,-16)};?>*\dir{>}\POS?(.5)*+!D{};
 (-25,1)*+{a};(-25,-7)*+{a};(-25,-15)*+{a};  
 (-15,-7)*+{a};(-15,-15)*+{a};(-5,-15)*+{a}; 
 (-29.9,-2)*+{u_1};(-19.9,-2)*+{v_{11}};
 (-29.9,-10)*+{u_2};(-19.9,-10)*+{v_{21}};(-9.9,-10)*+{v_{22}};
 (-29.9,-18)*+{u_3};(-19.9,-18)*+{v_{31}};(-9.9,-18)*+{v_{32}};(0.9,-18)*+{v_{33}};   
\endxy

\vskip 1pc

\noindent   
Note that each finite path  $a^n$ is not a loop  at any $A\in \bEz$ but it is a 
generalized loop at $r(a^k)$ for all $k\geq 1$.  
$(E, \CL, \bEz)$ has the generalized vertices as follows: 
\begin{align*}
[v_{ij}]_k & =\left\{
       \begin{array}{ll}
          r(a^k), & \hbox{if\ } 1\leq k\leq j \\
           r(a^j)\setminus r(a^{j+1}), & \hbox{if\ }1\leq j < k
       \end{array}
             \right. \\
([v_{ij}]_k)_\sk & =\left\{
       \begin{array}{ll}
          \{v_{mm}:\, m\geq k\, \}, & \hbox{if\ } 1\leq k\leq j \\
          \{v_{jj}\}, & \hbox{if\ }1\leq j < k
       \end{array}
             \right.\\ 
[v_{ij}]_k\setminus ([v_{ij}]_k)_\sk  & =\left\{
       \begin{array}{ll}
          \{v_{mn}: \, m>n\geq k\, \}, & \hbox{if\ } 1\leq k\leq j \\
           \{v_{mj}:\, m\geq j\, \}, & \hbox{if\ }1\leq j < k, 
       \end{array}
             \right. 
\end{align*}   
\noindent 
and every  $A\in \bEz$ is a finite union of these sets. 
    
Let $J$  be the ideal of $C^*(E,\CL, \bEz)=C^*(p_A,s_a)$ generated by 
the projection $p_{[v_{11}]_2}$. 
Then (\ref{eqn-ideal-elements}) shows that
$$J= \overline{\rm span}\{s_a^m p_{B} {s_a^*}^n : 
\ B\in  [v_{kk}]_{k+1} \cap \bEz, \ m,n \geq 0,\ k\geq 1\}.$$ 
From 
$p_{r(a)}-p_{r(a^2)}=p_{r(a)\setminus r(a^2)}=p_{[v_{11}]_2}\in J$, we have
$$s_a+J=s_ap_{r(a)}+J=p_{r(a)}s_a+J.$$ 
Thus  $s_a p_{r(a)}+J$ is a unitary of the unital hereditary subalgebra  
(with unit $p_{r(a)}+J$) of the quotient algebra $C^*(E,\CL,\bEz)/J$. 
The ideal $J$ is obviously invariant under the gauge action 
$\gm: \mathbb T\to {\rm Aut}(C^*(E,\CL,\bEz))$. 
Hence there exists an induced action $\gm:\mathbb T \to {\rm Aut}(C^*(E,\CL,\bEz)/J)$ 
such that $\gm_z(s_a p_{r(a)}+J)=z (s_a p_{r(a)}+J)$ for $z\in \mathbb T$. 
Thus the unitary $s_a p_{r(a)}+K$ does not belong to 
the unitary path connected component of the unit 
of the hereditary subalgebra of  $C^*(E,\CL,\bEz)/J$, which implies 
as in the proof of Theorem~\ref{thm-loopAF}
that $C^*(E,\CL,\bEz)/J$ and hence $C^*(E,\CL,\bEz)$ is not AF.   
\end{ex}

\vskip 1pc

\begin{notation} 
If $x_i\in A_iE^{\leq K} A_{i+1}$  
is a path with $\af_i =\CL(x_i)$ for $i=1,\dots,n$ such that 
$x_1\cdots x_n\in A_1 E^{\leq K}\cdots E^{\leq K} A_{n+1}$, then 
we set 
\begin{align*}
{\bar r}(A_{1}\af_1 A_{2}) &:=r(A_{1},\af_1)\cap A_{2}\\
{\bar r}(A_{1}\af_1 A_{2}\af_2 A_3) & :=r({\bar r}(A_{1}\af_1 A_{2}) , \af_2)\cap A_3 = 
 r(r(A_{1},\af_1)\cap A_{2},\af_2)\cap A_3,
\end{align*}  and so on, thus  for $3\leq i\leq n+1$,  
$${\bar r}(A_{1}\af_1 A_{2}\cdots \af_{i-1} A_{i}) := 
r({\bar r}(A_{1}\af_1 A_{2}\cdots A_{{i-1}}),\af_{i-1})\cap A_i.$$  
Note that ${\bar r}(A_{1}\af_1 A_{2}\cdots \af_{i-1} A_{i})$ belongs to 
$\bEz$ whenever $A_j\in \bEz$ for $1\leq j\leq i$. 
The notation 
${\bar r}(A_{1}E^{\leq K} A_{2} \cdots E^{\leq K}A_{n+1})$ will then be used 
for the  collection of all sets 
${\bar r}(A_{1}\af_1 A_{2}\cdots \af_{n-1} A_{n+1})$ for   
$\af_1\cdots\af_{n} \in \CL(A_{1}E^{\leq K}A_2 \cdots E^{\leq K}A_{n+1})$.
\end{notation} 
\vskip 1pc

\begin{thm}\label{thm-AF} Let $(E,\CL,\bEz)$ be a labeled space such that  
for every finite subset  $\{A_1, \dots, A_N\}$ of $\bEz$ and every $K\geq 1$, 
there exists an $m_0\geq 1$ for which 
$$ A_{i_1} E^{\leq K} A_{i_2} E^{\leq K} A_{i_3}\cdots E^{\leq K} A_{i_n} =\emptyset$$ 
for all $n> m_0$ and $1\leq i_j\leq N$. 
Then $C^*(E,\CL,\bEz)$ is an AF algebra. 
\end{thm}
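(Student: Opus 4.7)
The plan is to exhibit $C^*(E,\CL,\bEz)$ as the closure of an ascending union of finite-dimensional $*$-subalgebras. By Remark~\ref{remark-elements}(i), the algebra equals $\overline{\mathrm{span}}\{s_\af p_A s_\bt^*:\af,\bt\in\CL^\#(E),\,A\in\bEz\}$, so it suffices to show that every finite collection of such spanning elements is contained in a finite-dimensional $*$-subalgebra of $C^*(E,\CL,\bEz)$.

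Given a finite set $F=\{s_{\af_i} p_{A_i} s_{\bt_i}^*: 1\leq i\leq M\}$, I would set $K:=\max_i\max(|\af_i|,|\bt_i|)$ and construct a finite family $\CB\subset\bEz$ containing all of the sets $A_i$, $r(\af_i)$, $r(\bt_i)$ and closed under finite unions, intersections, relative complements, and under relative ranges $A\mapsto r(A,\gm)$ along labeled paths $\gm$ obtained by concatenating steps of length $\leq K$ through sets of $\CB$. The hypothesis yields an $m_0$ such that $A_{i_1}E^{\leq K}\cdots E^{\leq K}A_{i_n}=\emptyset$ for all $n>m_0$, which bounds the depth of any relevant labeled path by $m_0$ and hence its length by $m_0 K$; set-finiteness then makes the associated collection $P$ of labeled paths finite. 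Iterating the enlargement of $\CB$ by adjoining these relative ranges and re-applying the hypothesis, the process stabilizes at a finite family $\CB$ with an associated finite path set $P$.

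Now define $\CA(\CB,P):=\mathrm{span}\{s_\af p_A s_\bt^*: A\in\CB,\,\af,\bt\in P\cup\{\epsilon\},\,A\subset r(\af)\cap r(\bt)\}$. This is finite-dimensional and contains $F$ by construction. Closure under adjoint is immediate, and closure under multiplication follows from the four-case product formula of Remark~\ref{remark-elements}(i): in each case the resulting element has labels that are concatenations or suffixes drawn from $P$ and a projection index obtained by intersection or relative range within $\CB$, hence stays in $\CA(\CB,P)$. The main obstacle is verifying that the iterative closure construction for $\CB$ and $P$ terminates at a genuinely finite family; this is precisely where the hypothesis is used in full force, as it forbids the indefinite proliferation of new sets or labeled paths that a loop-type phenomenon would otherwise induce. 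Taking the union of the $\CA(\CB,P)$ over all finite sets $F$ of generators then yields a dense ascending union of finite-dimensional $*$-subalgebras, proving that $C^*(E,\CL,\bEz)$ is AF.
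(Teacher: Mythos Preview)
Your overall strategy coincides with the paper's: show that any finite self-adjoint set $F=\{s_{\af_i}p_{A_i}s_{\bt_i}^*\}$ generates a finite-dimensional $*$-algebra by controlling the sets and labeled paths that can appear in products. The product formula and the choice of $K=\max\{|\af_i|,|\bt_i|\}$ are also the same.

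The genuine gap is your stabilization claim. You assert that iteratively enlarging $\CB$ by Boolean closure and by relative ranges $r(A,\gm)$ ``stabilizes at a finite family'', but you do not prove this, and the circularity is real: each enlargement of $\CB$ potentially enlarges the set of admissible paths ``through sets of $\CB$'', which forces you to re-apply the hypothesis to a \emph{new} finite collection, yielding a possibly larger $m_0$, and so on. Nothing you have written rules out $m_0^{(k)}\to\infty$ and $\CB_k$ growing without bound. You also over-engineer by demanding closure under unions and relative complements, which the multiplication rule never forces.

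The paper sidesteps this entirely by applying the hypothesis \emph{once}, to the original $\{A_1,\dots,A_N\}$, and then tracking products explicitly. The key bookkeeping device is the iterated relative range
\[
\bar r(A_{i_1}\af_1 A_{i_2}\cdots \af_{n-1}A_{i_n}):=r\bigl(\bar r(A_{i_1}\af_1\cdots A_{i_{n-1}}),\af_{n-1}\bigr)\cap A_{i_n}.
\]
A direct induction on the number of factors shows that any product of elements of $F$ has the form $s_{\af_i\mu}\,p_A\,s_{\bt_j\nu}^*$ with $\mu,\nu\in\CL(F):=\bigcup_{n}\CL(A_{i_1}E^{\leq K}\cdots E^{\leq K}A_{i_n})$ and $A$ a finite intersection of sets from $A(F):=\bigcup_n \bar r(A_{i_1}E^{\leq K}\cdots E^{\leq K}A_{i_n})$. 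The single $m_0$ from the hypothesis makes $\CL(F)$ finite (receiver set-finiteness), hence $A(F)$ finite, and you are done. No fixed-point construction is needed: the sets that actually occur in products are \emph{always} indexed by chains through the \emph{original} $A_i$, because each multiplication step contributes at most one transition $A_i\to A_j$ of length $\leq K$. Recasting your argument in this form would close the gap.
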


\begin{proof} 
Let $F:=\{s_{\af_i} p_{A_i} s_{\bt_i}^*:\, A_i\subset r(\af_i)\cap r(\bt_i),\, i=1, \dots, N\}$ be 
a finite set in the $C^*$-algebra $C^*(E,\CL,\bEz)=C^*(s_a,p_A)$ 
with $F=F^*$. 
We shall show that $F$ generates a finite dimensional $C^*$-algebra.  
Set $K:=\max\{|\af_i|, |\bt_i|:\, i=1, \dots N\}$. 
By Remark~\ref{remark-elements}(i), we have
$$(s_{\af_i} p_{A_i} s_{\bt_i}^*)(s_{\af_j} p_{A_j} s_{\bt_j}^*)=\left\{
                      \begin{array}{ll}
                        s_{\af_i\gm'}p_{r(A_i,\gm')\cap A_j} s_{\bt_j}^*, & \hbox{if\ } \af_j=\bt_i\gm' \\
                        s_{\af_i}p_{A_i\cap r(A_j,\bt')} s_{\bt_j\bt'}^*, & \hbox{if\ } \bt_i=\af_j\bt'\\
                        s_{\af_i} p_{A_i\cap A_j}s_{\bt_j}^*, & \hbox{if\ } \bt_i=\af_j\\
                        0, & \hbox{otherwise,}
                      \end{array}
                    \right.
$$ 
and so if, for example,  $\af_j=\bt_i\gm'$ and  $\af_k=\bt_j\gm''$, we get  
\begin{align*} 
(s_{\af_i} p_{A_i} s_{\bt_i}^*)(s_{\af_j} p_{A_j} s_{\bt_j}^*)(s_{\af_k} p_{A_k} s_{\bt_k}^*) 
  & =  (s_{\af_i\gm'}p_{r(A_i,\gm')\cap A_j} s_{\bt_j}^*)(s_{\af_k} p_{A_k} s_{\bt_k}^*)\\
  &=  s_{\af_i\gm'\gm''}p_{r(r(A_i,\gm')\cap A_j,\gm'')\cap A_k} s_{\bt_k}^*.
\end{align*}
Here note that 
$ \gm'\gm''$ belongs to $\CL(A_i E^{|\gm'|}A_j E^{|\gm''|}A_k)$ and 
the set $ r(r(A_i,\gm')\cap A_j,\gm'')\cap A_k$ is equal to 
${\bar r}(A_i\gm'A_j\gm''A_k)$. 
Continuing a similar computation once more, for example with $\bt_k=\af_l \bt'$, we have 
\begin{align*} 
   & \ (s_{\af_i} p_{A_i} s_{\bt_i}^*)(s_{\af_j} p_{A_j} s_{\bt_j}^*)(s_{\af_k} p_{A_k} s_{\bt_k}^*)
(s_{\af_l} p_{A_l} s_{\bt_l}^*)  \\
 = & \  (s_{\af_i\gm'\gm''}p_{r(r(A_i,\gm')\cap A_j,\gm'')\cap A_k} s_{\bt_k}^*)
  (s_{\af_l} p_{A_l} s_{\bt_l}^*)\\
 = & \  s_{\af_i\gm'\gm''}p_{r(r(A_i,\gm')\cap A_j,\gm'')\cap A_k\cap r(A_l,\bt')} s_{\bt_l\bt'}^* 
\end{align*}
which is nonzero only when   
$\gm'\gm''\in \CL(A_i E^{|\gm'|}A_j E^{|\gm''|}A_k)$ and 
$\bt'\in \CL(A_l E^{|\bt'|}A_k)$.  
If this is the case, we have    
$$ s_{\af_i\gm'\gm''}p_{r(r(A_i,\gm')\cap A_j,\gm'')\cap A_k\cap r(A_l,\bt')} s_{\bt_l\bt'}^*  
= s_{\af_i\gm'\gm''}p_{{\bar r}(A_i\gm'A_j\gm''A_k)\cap {\bar r}(A_l\bt'A_k)} s_{\bt_l\bt'}^* 
$$ as before. 
 Repeating the process of multiplying any finite elements from the set $F$ 
actually produces an element of the form $s_{\af_i\mu} p_A s_{\bt_j\nu}^*$, where 
$A$ is a finite intersection of sets in 
$$ A(F):= \cup_{\substack{n\geq 1\\ 1\leq i_j\leq N}}\, {\bar r}\big( A_{i_1} E^{\leq K} A_{i_2} 
\cdots  E^{\leq K}A_{i_n} \big)$$ 
and $\mu$ and $\nu$ are paths in 
$$\CL(F):=\cup_{\substack{n\geq 1\\ 1\leq i_j\leq N}}\,
\CL(A_{i_1} E^{\leq K} A_{i_2} \cdots E^{\leq K}A_{i_n}).$$   
By our assumption, we find an $m_0\geq 1$ such that  
$\CL(A_{i_1} E^{\leq K} A_{i_2} \cdots E^{\leq K}A_{i_n})=\emptyset$ for all $n>m_0$,  
so that  $\CL(F)$ turns out to be a finite set  
since our labeled space is always assumed receiver set-finite. 
Then the finiteness of the set $A(F)$ is immediate, and so       
we conclude that $F$ generates the finite dimensional $*$-algebra;    
$$\overline{\rm span}\big\{ s_{\af_i\mu}p_A s_{\bt_j\nu}^*:\, A=\cap B_k,\ 
B_k\in A(F), \ \mu,\nu\in \CL(F),\, 1\leq i,j\leq N\, \big\}.$$
\end{proof}

\vskip 1pc
In the following example, we see that the condition that 
$(E,\CL,\bEz)$ has no repeatable paths is not a sufficient condition for 
$C^*(E,\CL,\bEz)$ to be AF.

\vskip 1pc 

\begin{ex}\label{ex-Morse} Consider the following labeled graph 
$(E,\CL)$:
\vskip 1pc

 \xy /r0.34pc/:(-45,0)*+{\cdots};(45,0)*+{\cdots ,};
(-40,0)*+{\bullet}="V-4";
(-30,0)*+{\bullet}="V-3";
(-20,0)*+{\bullet}="V-2";
(-10,0)*+{\bullet}="V-1"; (0,0)*+{\bullet}="V0";
(10,0)*+{\bullet}="V1"; (20,0)*+{\bullet}="V2";
(30,0)*+{\bullet}="V3";
(40,0)*+{\bullet}="V4";
 "V-4";"V-3"**\crv{(-40,0)&(-30,0)};
 ?>*\dir{>}\POS?(.5)*+!D{};
 "V-3";"V-2"**\crv{(-30,0)&(-20,0)};
 ?>*\dir{>}\POS?(.5)*+!D{};
 "V-2";"V-1"**\crv{(-20,0)&(-10,0)};
 ?>*\dir{>}\POS?(.5)*+!D{};
 "V-1";"V0"**\crv{(-10,0)&(0,0)};
 ?>*\dir{>}\POS?(.5)*+!D{};
 "V0";"V1"**\crv{(0,0)&(10,0)};
 ?>*\dir{>}\POS?(.5)*+!D{};
 "V1";"V2"**\crv{(10,0)&(20,0)};
 ?>*\dir{>}\POS?(.5)*+!D{};
 "V2";"V3"**\crv{(20,0)&(30,0)};
 ?>*\dir{>}\POS?(.5)*+!D{};
 "V3";"V4"**\crv{(30,0)&(40,0)};
 ?>*\dir{>}\POS?(.5)*+!D{};
 (-35,1.5)*+{0};(-25,1.5)*+{1};
 (-15,1.5)*+{1};(-5,1.5)*+{0};(5,1.5)*+{x_0=0};
 (15,1.5)*+{x_1=1};(25,1.5)*+{1};(35,1.5)*+{0};
 (0.1,-2.5)*+{v_0};(10.1,-2.5)*+{v_1};
 (-9.9,-2.5)*+{v_{-1}};
 (-19.9,-2.5)*+{v_{-2}};
 (-29.9,-2.5)*+{v_{-3}};
 (-39.9,-2.5)*+{v_{-4}}; 
 (20.1,-2.5)*+{v_{2}};
 (30.1,-2.5)*+{v_{3}};
 (40.1,-2.5)*+{v_{4}}; 
\endxy 

\vskip 1pc
\noindent 
where the $\{0,1\}$ sequence 
is the {\it  Morse sequence} 
$$x=\cdots x_{-2} x_{-1} x_0 x_1 x_2\cdots $$ 
given by 
$x_0=0$, $x_1=\bar 0:=1$ ($\bar 1:=0$), 
$x_{[0,3]}:=x_0 x_1 \bar x_0 \bar x_1 =0110$, 
$x_{[0,7]}:=x_0 x_1 x_2 x_3\bar x_0 \bar x_1 \bar x_2 \bar x_3 = 01101001$ 
and so on, and then $x_{-i}:=x_{i-1}$ for $i\geq 1$. 
It is known that 
$x$ contains no block (no finite subsequence) of the form 
$\bt\bt \bt_1$ for $\bt=\bt_1\cdots \bt_{|\bt|}\in \CL^*(E)$. 
(See \cite{GH} for the Morse sequence.)
Thus $(E,\CL,\bEz)$ has no repeatable paths satisfying 
(b) in Remark~\ref{rmk-AF-eq}. 
But, the set $A:=r(0)$, with $K:=2$, satisfies  
$$A_{i_1}E^{\leq 2}A_{i_2} \cdots E^{\leq 2} A_{i_n}\neq \emptyset$$ 
for all $n\geq 1$, where 
$A_{i_j}=A$, $j\geq 1$.  
This is because the block $111$ does not appear in the sequence $x$. 
Thus $(E,\CL,\bEz)$ does not 
meet the condition (a) in Remark~\ref{rmk-AF-eq}. 
To see that $C^*(E,\CL,\bEz)$ 
(equivalently, $M_2\otimes C^*(E,\CL,\bEz)$) is not AF, 
it is enough to show that 
$M_2\otimes C^*(E,\CL,\bEz)$ contains a unitary $U$ such that 
$(id_{M_2}\otimes\gm)_z(U)=zU$ for all $z\in \mathbb T$, 
where $\gm$ is the gauge action of $\mathbb T$ on  
$C^*(E,\CL,\bEz)=C^*(s_i,p_A)$ 
(\cite[Proposition 3.9]{ES}). 
Actually one can easily check that  the unitary $U=(u_{ij})$, with entries 
$u_{ij}=\dt_{ij}\,s_0 + (1-\dt_{ij})s_1$, is a desired one.   
\end{ex}

\vskip 1pc

Now  we prove a partial result about the implication 
(c) $\Rightarrow$ (b) of  Remark~\ref{rmk-AF-eq}. 
For a   $C^*$-algebra $C^*(E,\CL,\bEz)=C^*(s_a,p_A)$ and 
a set $A\in \bEz$, we denote by $I_A$ the ideal of  $C^*(E,\CL,\bEz)$ 
generated by the projection $p_A$ as before. 

\vskip 1pc

\begin{lem}\label{lem-ideal} 
Let $C^*(E,\CL,\bEz)=C^*(s_a,p_A)$ be the 
$C^*$-algebra of a labeled graph $(E,\CL)$ with no sinks or sources.  
For $A, B\in \bEz$,  we have 
$p_A\in I_B$ if and only if 
there exist an $N\geq 1$ and finitely many paths $\{\mu_i\}_{i=1}^n$   
in $\CL(BE^{\geq 0})$ such that 
$$\cup_{|\bt|=N} \, r(A,\bt)\subset \cup_{i=1}^n r(B,\mu_i).$$
\end{lem}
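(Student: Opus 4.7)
The plan is to combine the explicit description of $I_B$ in (\ref{eqn-ideal-elements}) with the decomposition $p_A=\sum_{|\bt|=N}s_\bt p_{r(A,\bt)}s_\bt^{*}$, valid for every $N\geq 1$ by Remark~\ref{remark-elements}(iii) since $E$ has no sinks.

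For the easier direction ($\Leftarrow$), assume $\cup_{|\bt|=N}r(A,\bt)\subset\cup_{i=1}^{n}r(B,\mu_i)$. Relations (ii) and (iii) of Definition~\ref{def-representation} give $s_{\mu_i}^{*}p_B s_{\mu_i}=p_{r(B,\mu_i)}$, so each $p_{r(B,\mu_i)}$ lies in $I_B$; inclusion--exclusion via Definition~\ref{def-representation}(i) then places $p_{\cup_i r(B,\mu_i)}$ in $I_B$ as well. Since $r(A,\bt)\subset\cup_i r(B,\mu_i)$, the identity $p_{r(A,\bt)}=p_{r(A,\bt)}\,p_{\cup_i r(B,\mu_i)}$ shows $p_{r(A,\bt)}\in I_B$ for every $\bt$, and summing produces $p_A\in I_B$.

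The harder direction ($\Rightarrow$) rests on the gauge action $\gm$ and its conditional expectation $\Phi$ onto the fixed-point subalgebra. Because $I_B$ is gauge-invariant (its generator $p_B$ is fixed by $\gm$), the projection $p_A=\Phi(p_A)$ can be approximated within any $\epsilon<1$ by a gauge-invariant element of $I_B$ of the form $y=\sum_{k}c_k s_{\af_k}p_{C_k}s_{\bt_k}^{*}$ with $|\af_k|=|\bt_k|$ and each $C_k\subset r(B,\nu_k)$ for some $\nu_k\in\CL(BE^{\geq 0})$. Choose any $N\geq\max_k|\af_k|$. For every $\eta\in\CL(AE^{N})$ (a finite set by set-finiteness), one has $s_\eta^{*}p_A s_\eta=p_{r(A,\eta)}$, while a direct computation from $s_a^{*}s_b=\delta_{ab}p_{r(a)}$ and $p_X s_a=s_a p_{r(X,a)}$ shows that the only nonzero contributions to $s_\eta^{*}y s_\eta$ come from those indices $k$ for which $\af_k=\bt_k$ is an initial segment of $\eta$, say $\eta=\af_k\eta'_k$, each contributing $c_k p_{E_k^{\eta}}$ with $E_k^{\eta}=r(\eta)\cap r(C_k,\eta'_k)\subset r(B,\nu_k\eta'_k)$.

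The proof is completed by an orthogonality step: were $r(A,\eta)$ not contained in $\cup_k E_k^{\eta}$, the nonzero projection $p_F$ with $F=r(A,\eta)\setminus\cup_k E_k^{\eta}\in\bEz$ would be orthogonal to every $p_{E_k^{\eta}}$, forcing $\|p_{r(A,\eta)}-s_\eta^{*}y s_\eta\|\geq 1$ and contradicting the choice $\epsilon<1$. Hence $r(A,\eta)\subset\cup_k r(B,\mu_k^{\eta})$ with $\mu_k^{\eta}:=\nu_k\eta'_k$. Collecting these finitely many $\mu_k^{\eta}$ over all $k$ and over all $\eta\in\CL(AE^{N})$ yields the required finite family $\{\mu_1,\dots,\mu_n\}\subset\CL(BE^{\geq 0})$ realizing $\cup_{|\bt|=N}r(A,\bt)\subset\cup_i r(B,\mu_i)$. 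The main technical hurdle is the sandwich combinatorics: gauge-invariance ($|\af_k|=|\bt_k|$) together with $|\eta|\geq\max_k|\af_k|$ is exactly what forces $\af_k=\bt_k$ on the surviving terms, and it is this diagonalization that translates abstract ideal membership into a concrete containment of relative ranges.
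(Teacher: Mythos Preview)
Your proof is correct and follows essentially the same route as the paper's: the explicit description (\ref{eqn-ideal-elements}) of $I_B$, the gauge-invariant conditional expectation to force $|\af_k|=|\bt_k|$, the expansion $p_A=\sum_{|\bt|=N}s_\bt p_{r(A,\bt)}s_\bt^*$, and a sandwich $s_\eta^*(\cdot)s_\eta$ followed by an orthogonality contradiction. The only cosmetic difference is that the paper first normalizes all the $\af_k$ to a common length $N$ (using the no-sinks hypothesis to lengthen) and then works with the original $\mu_i$'s, whereas you leave the $|\af_k|$ unequal, take $N\geq\max_k|\af_k|$, and explicitly extend each $\nu_k$ to $\nu_k\eta'_k$---the same bookkeeping done at a different stage.
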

\begin{proof} 
If $p_A\in I_B$, we can approximate $p_{A}$, within  a small enough $\varepsilon>0$,  
by an element  $\sum_{i=1}^n c_i s_{\bt_i}p_{B_i\cap r(B,\mu_i)} s_{\gm_i}^*$ of $I_B$, 
where $c_i\in \mathbb C$, $\bt_i, \gm_i\in \CL(A  E^{\geq 0})$, $B_i\in \bEz$, and
 $\mu_i\in  \CL(B E^{\geq 0})$ for $1\leq i\leq n$ (see (\ref{eqn-ideal-elements})). 
 We assume $(\bt_i, \mu_i,\gm_i)\neq (\bt_j,\mu_j,\gm_j)$ if $i\neq j$.
Considering the image of 
$X:= p_{A}-\sum_{i=1}^n c_i s_{\bt_i}p_{B_i\cap r(B,\mu_i)} s_{\gm_i}^*$ 
under the conditional expectation onto the AF core 
(the fixed point algebra of the gauge action), 
we may assume that $|\bt_i|=|\gm_i|$, $1\leq i\leq n$, 
since $p_A$ is in the core. 
Moreover, since $(E,\CL)$ has no sinks, 
we can also assume that $|\bt_i|=|\bt_1|$ for all $i$. 
Put $N :=|\bt_i|$, $1\leq i\leq n$.  
From  
$p_{A}=\sum_{|\bt|=N} s_\bt p_{r(A,\bt)} s_\bt^*$,  
 we have 
$$\|X\|=\big\| \sum_{|\bt|=N} s_\bt p_{r(A,\bt)} s_\bt^*   
- \sum_{i=1}^n c_i s_{\bt_i}p_{B_i\cap r(B,\mu_i)} s_{\gm_i}^*\big\|<\varepsilon. $$ 
If $r(A,\bt)\not\subset  \cup_{i=1}^n r(B,\mu_i)$ for some $\bt\in \CL(AE^N)$, 
that is, $A':=r(A,\bt)\setminus \cup_{i=1}^n r(B,\mu_i)\neq \emptyset$,  
one obtains a contradiction, 
$\varepsilon > \|p_{A'}(s_\bt^* Xs_\bt) p_{A'}\|=\|p_{A'}\|=1$. 

For the reverse inclusion, it is enough to note that  
$p_{\cup_{i=1}^n r(B,\mu_i)}\in I_B$ (see \cite[Lemma 3.5]{JKP}). 
\end{proof}

\vskip 1pc
If $\af$ is a repeatable path in a directed graph $E$, then $\af$ 
is a loop with the range $r(\af)$ consisting of 
a single vertex and every repetition $\af^n$ also has 
the same range as $\af$, $r(\af^m)=r(\af)$, $m\geq 1$. 
The projection $p_{r(\af)\setminus r(\af^m)}$ is then equal to 
$0$ in the  $C^*$-algebra $C^*(E,\CL_{id},\bEz)$, and so the (zero) ideal 
generated by the projection $p_{r(\af)\setminus r(\af^m)}$ 
can not have the nonzero projection $p_{r(\af)}$. 
In this case, we already know that $C^*(E)=C^*(E,\CL_{id},\bEz)$ is not AF.

\vskip 1pc 

\begin{thm}\label{thm-repeatablepath} 
Let $C^*(E,\CL,\bEz)=C^*(s_a,p_A)$ be the 
$C^*$-algebra of a labeled graph $(E,\CL)$ with no sinks or sources. 
Let $C^*(E,\CL,\bEz)$ have a repeatable path $\af\in \CL^*(E)$.   
If $p_{r(\af^m)}$ does not belong to the ideal generated by a projection 
$p_{r(\af^m)\setminus r(\af^{m+1})}$ for some $m\geq 1$, 
$C^*(E,\CL,\bEz)$ is not AF. 
\end{thm}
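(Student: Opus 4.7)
The plan is to reduce to the situation of Theorem~\ref{thm-loopAF} by passing to a quotient in which $\af$ effectively becomes a loop. Set $J := I_{r(\af^m)\setminus r(\af^{m+1})}$, the ideal of $C^*(E,\CL,\bEz)$ generated by $p_{r(\af^m)\setminus r(\af^{m+1})}$, put $A := C^*(E,\CL,\bEz)/J$, and let $\bar p := p_{r(\af^m)} + J \in A$. The hypothesis is precisely that $\bar p \neq 0$. Since $p_{r(\af^m)} - p_{r(\af^{m+1})} = p_{r(\af^m)\setminus r(\af^{m+1})} \in J$, we get $\bar p = p_{r(\af^{m+1})} + J$, so the loop relation ``$A = r(A,\af)$'' is forced to hold modulo $J$. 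Moreover, the generator of $J$ is gauge-fixed, so $J$ is gauge-invariant and the gauge action $\gm$ descends to a strongly continuous $\mathbb{T}$-action $\bar\gm$ on $A$ that fixes $\bar p$.

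The candidate unitary is $V := s_\af p_{r(\af^{m+1})} + J$. Using $p_B s_\af = s_\af p_{r(B,\af)}$ and the inclusions $r(\af^{m+1}) \subset r(\af^m) \subset r(\af)$, a direct computation gives $\bar p V = V\bar p = V$ and
$$
V^*V = p_{r(\af^{m+1})} s_\af^* s_\af p_{r(\af^{m+1})} + J = p_{r(\af^{m+1})} + J = \bar p .
$$
For the complementary inequality $VV^* \leq \bar p$ I would invoke the length-$|\af|$ expansion of Remark~\ref{remark-elements}(iii) applied to $p_{r(\af^m)}$: since $E$ has no sinks, this yields the orthogonal sum
$$
p_{r(\af^m)} = \sum_{\bt\in\CL(r(\af^m)E^{|\af|})} s_\bt p_{r(r(\af^m),\bt)} s_\bt^* ,
$$
in which the summand at $\bt = \af$ is exactly $s_\af p_{r(\af^{m+1})} s_\af^*$. (Repeatability of $\af$ guarantees $\af \in \CL(r(\af^m)E^{|\af|})$.) Hence $VV^* \leq \bar p$ in $A$, and $V$ is a partial isometry in the corner $\bar p A \bar p$ with $V^*V = \bar p$.

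Now suppose, for contradiction, that $C^*(E,\CL,\bEz)$ is AF. Then so is the quotient $A$, and hence the hereditary subalgebra $\bar p A \bar p$; in particular, $\bar p A \bar p$ is stably finite, so its unit $\bar p$ is a finite projection. Combined with $V^*V = \bar p \sim VV^* \leq \bar p$, finiteness forces $VV^* = \bar p$, so $V$ is a unitary of $\bar p A \bar p$. A direct computation yields $\bar\gm_z(V) = z^{|\af|}V$ with $|\af| \geq 1$, and the argument at the end of Theorem~\ref{thm-loopAF}, invoking \cite[Proposition 3.9]{ES}, produces the required contradiction.

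The main obstacle is the inequality $VV^* \leq \bar p$: this is the step where the no-sinks assumption is essential, so as to eliminate the $(\cdot)_\sk$ terms in Remark~\ref{remark-elements}(iii), and where the repeatability of $\af$ is needed so that $\af$ actually appears in the length-$|\af|$ expansion. Once this is in place, the hypothesis $p_{r(\af^m)} \notin J$ plays exactly the role of keeping $\bar p$ nonzero, making the gauge-action obstruction to AF-ness transferred from Theorem~\ref{thm-loopAF} effective.
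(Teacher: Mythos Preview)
Your argument is correct and follows the same core strategy as the paper: pass to the quotient by the gauge-invariant ideal generated by $p_{r(\af^m)\setminus r(\af^{m+1})}$, produce in a hereditary corner a unitary $V$ satisfying $\bar\gm_z(V)=z^{|\af|}V$, and invoke \cite[Proposition~3.9]{ES} to contradict the AF hypothesis. Your computations of $V^*V$, $VV^*\le \bar p$, and the gauge-scaling are all sound; the use of Remark~\ref{remark-elements}(iii) with the no-sinks hypothesis is exactly what is needed.

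Where you diverge from the paper is in the organisation, and yours is the more economical one. The paper first establishes a \emph{Claim} for $m=1$ by an explicit dichotomy: either $s_\af^*s_\af+I_{A_1}=s_\af s_\af^*+I_{A_1}$ (unitary case, gauge obstruction) or the inequality is strict (infinite projection). It then reduces the general case $m\ge 2$ to the Claim by replacing $\af$ with $\dt=\af^m$ and invoking Lemma~\ref{lem-ideal} to show that $p_{r(\dt)}\notin I_{r(\dt)\setminus r(\dt^2)}$. You instead work directly with the given $m$ and the partial isometry $V=s_\af p_{r(\af^{m+1})}+J$, and you fold the dichotomy into a single step by first assuming AF, so that finiteness of $\bar p$ forces $VV^*=\bar p$. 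This bypasses both the case split and the appeal to Lemma~\ref{lem-ideal}; the paper's route, on the other hand, isolates the infinite-projection alternative, which gives slightly more information when the algebra is not AF.
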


\begin{proof} Let $A_m:=r(\af^m)\setminus r(\af^{m+1})$. 
Then $\{I_{A_m}\}_{m=1}^\infty$ is a decreasing sequence of ideals because 
the generator  $ p_{r(\af^{m+1})\setminus r(\af^{m+2})}$ of $I_{A_{m+1}}$ 
belongs to $I_{A_m}$; 
$$ p_{r(\af^{m+1})\setminus r(\af^{m+2})}
= s_\af^*s_\af\, p_{r(r(\af^{m})\setminus r(\af^{m+1}),\af)}
= s_\af^*p_{r(\af^m)\setminus r(\af^{m+1})}s_\af\in I_{A_m}.$$ 
We first show the following claim.
\vskip .5pc

\noindent
{\bf Claim}: If $p_{r(\af)}$ does not belong to the ideal   
generated by $p_{r(\af)\setminus r(\af^2)}$, then 
the $C^*$-algebra $C^*(E,\CL,\bEz)$ is not AF. 

\vskip .5pc
\noindent To prove the claim, it is enough to show that 
the quotient algebra $C^*(E,\CL,\bEz)/I_{A_1}$ is not AF. 
Note that 
$p_{r(\af)} + I_{A_1} =p_{r(\af^2)}+ I_{A_1}$ is 
a nonzero projection in the quotient algebra $C^*(E,\CL,\bEz)/I_{A_1}$ and that  
$$I_{A_1}
=\overline{\rm span}\big\{ s_\bt p_B s_\gm^*:\, \bt,\, \gm\in\CL(E^{\geq 0})
\text{ and } B\in r(\CL(A_1 E^{\geq 0}))\cap \bEz \, \big\}$$   
by (\ref{eqn-ideal-elements}). 
 If $s_\af^* s_\af + I_{A_1}= s_\af s_\af^*+ I_{A_1}$, 
the hereditary subalgebra of 
$C^*(E,\CL,\bEz)/I_{A_1}$ with the unit projection $p_{r(\af)}+ I_{A_1}$ 
is not AF since it contains a unitary $s_\af + I_{A_1}$ satisfying 
$\gm_z(s_\af + I_{A_1})=z^{|\af|}(s_\af + I_{A_1})$ for each $z\in \mathbb C$. 
Thus the hereditary subalgebra (hence $C^*(E,\CL,\bEz))$ is not an AF algebra. 
 (The fact that $s_\af +I_{A_1}$ belongs to the hereditary subalgebra 
 follows from $p_{r(\af)}s_\af+ I_{A_1} =s_\af p_{r(\af^2)}+ I_{A_1}
=s_\af p_{r(\af)}+ I_{A_1}=s_\af +I_{A_1}$.) 
If $s_\af^* s_\af +I_{A_1}\neq s_\af s_\af^*+ I_{A_1}$, then  
$s_\af^* s_\af+ I_{A_1}=p_{r(\af)}+ I_{A_1}\geq s_\af p_{r(\af^2)}s_\af^*+I_{A_1}
=s_\af p_{r(\af)}s_\af^*+I_{A_1}= s_\af s_\af^*+I_{A_1}$ and   
this shows that $s_\af^* s_\af +I_{A_1}\gneq s_\af s_\af^*+I_{A_1}$. 
Thus the projection $s_\af^* s_\af + I_{A_1}$ is infinite, 
and the quotient algebra is not AF as claimed. 
 
Now suppose that  $p_{r(\af^m)}\notin I_{A_m}$ for some $m\geq 2$. 
Since  $\dt:=\af^m$ is a repeatable path, 
by the above claim,  we only need to show 
that $p_{r(\dt)}$ does not belong to the ideal, say $J$, generated by the projection 
$p_{r(\dt)\setminus r(\dt^2)}= p_{r(\af^m)\setminus r(\af^{2m})}$.
For this, assuming $p_{r(\dt)}\in J$ we have from Lemma~\ref{lem-ideal} that 
there exist an $N\geq 1$ and paths $\{\mu_j\}_{j=1}^n$ such that 
$$r(r(\dt), \bt)\subset 
\cup_{i=1}^n r\big(r(\af^m)\setminus r(\af^{2m}),\mu_i\big)$$ 
for all $\bt\in \CL(r(\dt)E^N)$.
Since each set $r(r(\af^m)\setminus r(\af^{2m}),\mu_i)$ coincides with   
$$\cup_{j=0}^{m-1}\, r\big(r(\af^{m+j})\setminus r(\af^{m+j+1}),\,\mu_i\big)
=\cup_{j=0}^{m-1}\, r\big(r(\af^{m})\setminus r(\af^{m+1}),\af^j\mu_i\big) ,$$ 
we can write the set 
$\cup_{i=1}^n r\big(r(\af^m)\setminus r(\af^{2m}),\,\mu_i\big)$ as 
$\cup_{j=1}^{n'} r\big(r(\af^m)\setminus r(\af^{m+1}),\mu'_j)$ for some 
finitely many paths $\mu'_j$ which is of the form $\af^l\mu_i$. 
This  means that
$ p_{r(\af^m)}= p_{r(\dt)} \in I_{A_m}$ again by Lemma~\ref{lem-ideal}, 
which is a contradiction. 
\end{proof}

\vskip 1pc

Assume that $E$ is a graph with no sinks or sources. 
Recall from \cite[Definition 5.1]{BP1} and \cite[Proposition 3.9]{JKP} 
that a labeled space $(E,\CL,\bEz)$ is {\it disagreeable} if 
$[v]_l$ is disagreeable for all $v\in E^0$ and $l\geq 1$; 
a generalized vertex $[v]_l$ is {\it not disagreeable} if and only if 
there is an $N>0$ such that every path $\af\in \CL([v]_l E^{\geq N})$ is 
{\it agreeable}, namely is of the form 
$\af=\bt^k \bt'$ for some $k\geq 0$ and some paths $\bt, \bt'\in \CL(E^{\leq l})$,  
where $\bt'$ is an initial path of $\bt$. 
 In case  of trivial labeling, $(E,\CL_{id}, \bEz)$ is disagreeable 
 exactly when the graph $E$ satisfies condition (L) \cite[Lemma 5.3]{BP1}.  
 Condition (L) meaning that every loop has an exit was introduced 
 as an essential hypothesis for  the Cuntz-Krieger uniqueness theorem 
 for graph $C^*$-algebras in \cite{KPR}.  
 More generally, in \cite[Theorem 5.5]{BP1}  
 it is known that if $(E,\CL, \bEz)$ is disagreeable, 
 Cuntz-Krieger uniqueness Theorem holds, that is, 
 if $\{S_a, P_A\}$ is a representation of a labeled space $(E,\CL,\bEz)$  
 such that $S_a\neq 0$ and $P_A\neq 0$ for all $a\in \CA$ and 
 $A\in \bEz$, the $C^*$-algebra $C^*(S_a,P_A)$ generated by $\{S_a, P_A\}$ 
 is isomorphic to $C^*(E,\CL,\bEz)$.  
 
As pointed out in \cite{BP2}, a disagreeable labeled space $(E,\CL,\bEz)$  
contains lots of aperiodic paths and in fact, as can be seen in the following proposition,   
$(E,\CL,\bEz)$ is disagreeable whenever it has no repeatable paths. 
 
\vskip 1pc 

\begin{prop}  Let $E$  be a  directed graph with no sinks or sources. 
If the labeled space $(E,\CL,\bEz)$ has no repeatable paths, it is always disagreeable.
\end{prop}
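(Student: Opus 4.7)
I propose to prove the contrapositive: if $(E,\CL,\bEz)$ is not disagreeable, then it contains a repeatable path. So suppose some generalized vertex $[v]_l$ fails to be disagreeable. By the definition recalled in the excerpt, there exists $N > 0$ such that every $\af \in \CL([v]_l E^{\geq N})$ is agreeable, i.e.\ admits a decomposition $\af = \bt^k \bt'$ with $\bt \in \CL(E^{\leq l})$, $k \geq 0$, and $\bt'$ an initial path of $\bt$.

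Since $E$ has no sinks or sources, I can pick any $w \in [v]_l$ and build an infinite path $x = x_1 x_2 \cdots$ with $s(x_1) = w$, producing a fixed infinite label sequence $a_1 a_2 \cdots \in \CA^\infty$ with $a_i = \CL(x_i)$. For every $M \geq \max(N,l)$, the prefix $a_{[1,M]}$ lies in $\CL([v]_l E^M) \subset \CL([v]_l E^{\geq N})$, so by assumption it admits a decomposition $a_{[1,M]} = \bt_M^{k_M} \bt'_M$ with $|\bt_M| \leq l$. The bound $M \geq l$ forces $k_M \geq 1$, which pins down $\bt_M$ as the prefix of the fixed infinite sequence $a_1 a_2 \cdots$ of length $|\bt_M|$.

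Now comes the key pigeonhole step. The candidate periods $\bt_M$ are all prefixes of $a_1 a_2 \cdots$ of length at most $l$, of which there are only finitely many (at most $l$). Hence some single word $\bt$ of length $\leq l$ occurs as $\bt_M$ for infinitely many $M$. For these $M$ the identity $a_{[1,M]} = \bt^{k_M} \bt'_M$ with $|\bt'_M| < |\bt| \leq l$ forces $k_M \geq (M - l)/l$, so $k_M \to \infty$ along this subsequence. Consequently $\bt^n$ is a labeled path for every $n \geq 1$, making $\bt$ repeatable and contradicting the hypothesis.

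The substantive obstacle is the pigeonhole extraction of a single period $\bt$: a priori each $M$ could have its own period $\bt_M$, and the decomposition need not be unique (for example $\bt = aa$ versus $\bt = a$). The trick is to first commit to one infinite path and its label sequence, so that whenever $k_M \geq 1$ the period $\bt_M$ is forced to be the unique prefix of that fixed sequence of length $|\bt_M|$; this makes the finitely many candidates literally identifiable, at which point the pigeonhole argument yields the repeatable word. The no-sinks hypothesis is exactly what guarantees the existence of the infinite path on which this argument rides.
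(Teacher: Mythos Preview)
Your proof is correct and follows essentially the same contrapositive pigeonhole argument as the paper. The paper's version is terser: it simply notes that the finitely many candidate periods $\bt \in \CL([v]_l E^{\leq l})$ must account for infinitely many agreeable paths in $\CL([v]_l E^{\geq N})$, forcing some $\bt$ to have unbounded exponent; your extra step of fixing a single infinite path to pin down each $\bt_M$ as a specific prefix makes the extraction more explicit but is not logically necessary, since once $k_M \geq 1$ the word $\bt_M$ is automatically an initial segment of $\af$ and hence lies in the finite set $\CL([v]_l E^{\leq l})$ regardless.
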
  

\begin{proof} Assuming that $(E,\CL,\bEz)$ is not disagreeable, 
one can pick a generalized vertex  $[v]_l$ that is not disagreeable. 
Then there is an $N>0$ such that every $\af$ in $\CL([v]_l E^{\geq N})$ is 
agreeable and of the form $\af=\bt^k\bt'$ for some 
$\bt\in \CL([v]_l E^{\leq l})$ and its initial path $\bt'$.  
 On the other hand, there are only finitely many labeled paths in $\CL([v]_l E^{\leq l})$  
while $\CL([v]_l E^{\geq N})$ has infinitely many labeled paths. 
This shows that there should exist a  path $\bt$ in $\CL([v]_l E^{\leq l})$ 
such that  its repetitions $\bt^n$ appear  in  $\CL([v]_l E^{\geq N})$ 
for all sufficiently large $n$.   
\end{proof}

\vskip 1pc 

One might expect that a labeled space to be disagreeable if it has no loops, 
but this is not necessarily true; 
see the following example.  

\vskip 1pc

\begin{ex}\label{ex-final} 
Consider the following labeled graph 
$(E,\CL)$:
\vskip 1pc
\hskip 3pc
 \xy /r0.34pc/:(-35,0)*+{\cdots};(35,0)*+{\cdots ,};
(-30,0)*+{\bullet}="V-3";
(-20,0)*+{\bullet}="V-2";
(-10,0)*+{\bullet}="V-1"; (0,0)*+{\bullet}="V0";
(10,0)*+{\bullet}="V1"; (20,0)*+{\bullet}="V2";
(30,0)*+{\bullet}="V3";
 "V-3";"V-2"**\crv{(-30,0)&(-20,0)};
 ?>*\dir{>}\POS?(.5)*+!D{};
 "V-2";"V-1"**\crv{(-20,0)&(-10,0)};
 ?>*\dir{>}\POS?(.5)*+!D{};
 "V-1";"V0"**\crv{(-10,0)&(0,0)};
 ?>*\dir{>}\POS?(.5)*+!D{};
 "V0";"V1"**\crv{(0,0)&(10,0)};
 ?>*\dir{>}\POS?(.5)*+!D{};
 "V1";"V2"**\crv{(10,0)&(20,0)};
 ?>*\dir{>}\POS?(.5)*+!D{};
 "V2";"V3"**\crv{(20,0)&(30,0)};
 ?>*\dir{>}\POS?(.5)*+!D{};
 (-25,1.5)*+{-3};
 (-15,1.5)*+{-2};(-5,1.5)*+{-1};(5,1.5)*+{0};
 (15,1.5)*+{0};(25,1.5)*+{0}; 
 (0.1,-2.5)*+{v_0};(10.1,-2.5)*+{v_1};
 (-9.9,-2.5)*+{v_{-1}};
 (-19.9,-2.5)*+{v_{-2}};
 (-29.9,-2.5)*+{v_{-3}};
 (20.1,-2.5)*+{v_{2}};
 (30.1,-2.5)*+{v_{3}}; 
\endxy 

\vskip 1pc
\noindent 
Then $\bEz$ is the collection of all finite sets $F$ of  $E^0$  
and sets of the form $F\cup \{v_n,v_{n+1}, \dots\}$, $n\geq 1$. 
For the generalized vertex $\{v_0\}=[v_0]_1$, every path 
$\af\in \CL(v_0E^{\geq N})$ is agreeable since it must be equal to 
$a^m$ for some $m\geq N$, so the  labeled space $(E,\CL,\bEz)$ 
is not disagreeable, whereas  it is obvious that$(E,\CL,\bEz)$ has no loops.
\end{ex} 

\vskip 1pc

\end{document}